\def\arXiv#1{arXiv:\href{http://arXiv.org/abs/#1}{#1}}
\newtheorem{theorem}{Theorem}
\newtheorem{proposition}[theorem]{Proposition}
\newcommand{\sfcaption}[1]{\caption{{\sf #1}}}
\newcommand{\hh}{ {\mathbb H} }
\newcommand{\C}{{\mathbb C}}
\newcommand{\Z}{{\mathbb Z}}
\newcommand{\Q}{{\mathbb Q}}
\newcommand{\sO}{{\mathcal O}}
\newcommand{\sL}{{\mathcal L}}
\newcommand{\M}{\mathcal M}
\DeclareMathOperator{\Jac}{Jac}
\DeclareMathOperator{\Pic}{Pic}
\DeclareMathOperator{\Sym}{Sym}
\DeclareMathOperator{\tr}{tr}
\title[Real multiplication through explicit correspondences]
{Real multiplication through explicit correspondences}
\author{Abhinav Kumar and Ronen E. Mukamel}
\begin{document}
\maketitle

\begin{abstract}
We compute equations for real multiplication on the divisor classes of
genus two curves via algebraic correspondences.  We do so by
implementing van Wamelen's method for computing equations for
endomorphisms of Jacobians on examples drawn from the algebraic models
for Hilbert modular surfaces computed by Elkies and Kumar.  We also
compute a correspondence over the universal family for the Hilbert
modular surface of discriminant $5$ and use our equations to prove a
conjecture of A. Wright on dynamics over the moduli space of Riemann
surfaces.
\end{abstract}

\section{Introduction}
\label{sec:introduction}
Abelian varieties, their endomorphisms and their moduli spaces play a
central role in modern algebraic geometry and number theory. Their
study has important applications in a broad array of fields including
cryptography, dynamics, geometry, and mathematical physics.  Of
particular importance are the abelian varieties with extra
endomorphisms (other than those in $\Z$).  In dimension one, elliptic
curves with complex multiplication have been studied extensively.  In
this paper, we focus on curves of genus two whose Jacobians have real
multiplication by a real quadratic ring $\sO$.

For such a curve $C$ over a number field $K$, we use the ideas from
van Wamelen's work \cite{vW1,vW2,vW3} to explicitly compute the action
of real multiplication by $\sO$ on the divisors of $C$. In particular,
we determine equations for an algebraic correspondence on $C$, i.e., a
curve $Z$ with two maps $\phi, \psi : Z \to C$ such that the induced
endomorphism $T = \psi_* \circ \phi^*$ of $\Jac(C)$ generates
$\sO$. The discovery of the correspondence $Z$ uses floating-point
calculations on the analytic Jacobian $\Jac(C) \otimes \C$. We then
rigorously certify the real multiplication of $\sO$ on $\Jac(C)$ by
computing the action of $T$ on one-forms using exact arithmetic in
$K$, or a small degree extension of $K$. Combined with standard
equations for the group law on $\Jac(C)$, our techniques immediately
lead to an algebraic description the action of $\sO$ on degree zero
divisors of $C$.

This paper completes a research program initiated in \cite{EK} and
\cite{KM}.  Let $\M_{g,n}$ denote the moduli space of smooth genus $g$
curves with $n$ marked points, and, for each totally real order $\sO$,
denote by $\M_{g,n}(\sO)$ the locus of curves whose Jacobians admit
real multiplication by $\sO$. The paper \cite{EK} describes a method
for parametrizing the Humbert surface $\M_2(\sO) = \M_{2,0}(\sO)$ for
real quadratic $\sO$ as well as its double cover, the Hilbert modular
surface $Y(\sO)$. It also carries out the computation for $\sO =
\sO_K$, the ring of integers of every real quadratic field $K$ of
discriminant less than $100$, producing equations for the
corresponding Hilbert modular surfaces. The paper \cite{KM} describes
a method for computing the action of $\sO$ on the one-forms of curves
in $\M_2(\sO)$, and uses it in particular to compute algebraic models
for Teichm\"uller curves $\M_2$. Using these techniques one can
furnish equations defining curves $C \in \M_2(\sO)$, rigorously prove
that $\Jac(C)$ admits real multiplication by $\sO$ and rigorously
compute the action of $\sO$ on the one-forms of $C$.  In this paper we
solve the problem of describing the action of $\sO$ on $\Jac(C)$ as
algebraic morphisms by computing the action on the divisors of $C$.

\paragraph{Example for discriminant $5$.} 
To demonstrate our method, consider the genus two curve
\begin{equation}
\label{eqn:ex1}
 C : u^2 = t^5 - t^4 + t^3 + t^2 - 2t + 1.
\end{equation}
Equation \ref{eqn:ex1} was obtained from the equations in \cite{EK}.
Using the methods of this paper, we can formulate and prove the
following theorem.
\begin{theorem} \label{thm:ex1}
Let $\phi: Z \to C$ be the degree two cover of $C$ of Equation \ref{eqn:ex1} defined by
\[ t^2 x^2-x-t+1 = 0. \]
The curve $Z$ is of genus $6$ and admits an additional map $\psi : Z \to
C$ of degree two.  The induced endomorphism $T = \psi_* \circ \phi^*$ of
$\Jac(C)$ is self-adjoint with respect to the Rosati involution,
satisfies $T^2-T-1=0$ and generates real multiplication by $\sO_5$.
\end{theorem}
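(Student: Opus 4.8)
The plan is to reduce the theorem to one exact linear-algebra computation on $H^0(C,\Omega^1)$, after pinning down the geometry of $\phi$ and exhibiting $\psi$. Writing $C\colon u^2=f(t)$ with $f(t)=t^5-t^4+t^3+t^2-2t+1$, solving $t^2x^2-x-t+1=0$ for $x$ shows $\C(Z)=\C(C)\bigl(\sqrt{\Delta}\bigr)$ with $\Delta=4t^3-4t^2+1$, so $\phi$ is the double cover of $C$ branched exactly at the places where $\Delta$ has odd valuation. One checks that $\Delta$ is separable and coprime to $f$; then $C\to\Proj^1_t$ is unramified over the three roots of $\Delta$, producing exactly $6$ branch points, while the point of $C$ over $t=\infty$ contributes none since $v(\Delta)=-6$ there. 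Riemann--Hurwitz gives $2g(Z)-2=2\bigl(2g(C)-2\bigr)+6=10$, i.e.\ $g(Z)=6$. (As a cross-check: $\C(Z)/\C(t)$ is the biquadratic field $\C(t)\bigl(\sqrt f,\sqrt\Delta\bigr)$, so $\Jac(Z)$ is isogenous to $\Jac(C)\times E\times\Jac(C')$ with $E\colon y^2=\Delta$ of genus $1$ and $C'\colon w^2=f\Delta$ of genus $3$, and $2+1+3=6$.)

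Next I would produce $\psi$. Following van Wamelen's approach, one searches for $\psi$ numerically on the analytic Jacobian $\Jac(C)\otimes\C$ and records it by explicit equations defined over $\Q$; the symmetry $t\leftrightarrow x$ of $t^2x^2-x-t+1=0$ is what makes such a $\psi$ available. Granting the explicit formula, checking that $\psi$ is a morphism of degree $2$ is a finite symbolic verification: the rational map has no base points on the smooth model $Z$, and $[\C(Z):\psi^*\C(C)]=2$. This defines $T=\psi_*\circ\phi^*\in\End(\Jac(C))$.

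The crux is computing the action of $T$ on $H^0(C,\Omega^1)=\langle\,dt/u,\ t\,dt/u\,\rangle$: pull these forms back along $\phi$ (a rational substitution, using $t^2x^2-x-t+1=0$ to rewrite forms on $Z$), then push them forward along $\psi$ (the field trace $\operatorname{Tr}_{\C(Z)/\psi^*\C(C)}$), working in exact arithmetic over the number field $K$ of definition. This produces a $2\times2$ matrix $M$ over $K$, and one checks directly that $M^2-M-I_2=0$ and that $M$ is not scalar (so its eigenvalues are the distinct numbers $\tfrac{1\pm\sqrt5}{2}$). Since we are in characteristic zero the analytic representation $\End^0(\Jac(C))\to\End_\C\bigl(H^0(C,\Omega^1)\bigr)$ is faithful --- an endomorphism killing every global $1$-form has vanishing differential at the origin, hence is zero --- so $T^2-T-1=0$ in $\End(\Jac(C))$. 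Therefore $\Z[T]\cong\Z[x]/(x^2-x-1)=\Z\bigl[\tfrac{1+\sqrt5}{2}\bigr]=\sO_5$ embeds in $\End(\Jac(C))$, and the two distinct real eigenvalues of $M$ are precisely the two real embeddings of $\sO_5$, each occurring once on $H^0(C,\Omega^1)$: this is real multiplication by the maximal order $\sO_5$.

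It remains to see that $T$ is self-adjoint. The Rosati involution $\dagger$ is a positive involution of $\End^0(\Jac(C))$, and its restriction to the field $\Q[T]\cong\Q(\sqrt5)$ is a $\Q$-algebra involution, hence either the identity or Galois conjugation. Conjugation is impossible, since positivity of the Rosati trace form requires $\operatorname{Tr}_{\End^0(\Jac(C))/\Q}(x\,x^\dagger)>0$ for all $x\neq0$, whereas for $x=\sqrt5$, so $x^\dagger=-\sqrt5$, this trace is a positive rational multiple of $\operatorname{Tr}_{\Q(\sqrt5)/\Q}(-5)=-10<0$. Hence $\dagger$ fixes $\Q[T]$ and $T^\dagger=T$; equivalently, one can compute the matrix of $T^\dagger=\phi_*\circ\psi^*$ on $H^0(C,\Omega^1)$ in the same way and verify it equals $M$. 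I expect the genuine obstacles to be the numerical discovery of $\psi$ on $\Jac(C)\otimes\C$ and the careful exact bookkeeping of the pushforward $\psi_*$ of differentials through the degree-two extension $\C(Z)/\psi^*\C(C)$; once $M$ is known, the identity $M^2-M-I_2=0$ together with the positivity argument make the rest of the theorem formal.
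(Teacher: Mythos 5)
Your strategy is essentially the paper's: exhibit $\psi$ explicitly (found numerically \`a la van Wamelen), certify it by computing the induced action on $\Omega(C)$ in exact arithmetic via a field trace, read off the minimal polynomial using faithfulness of the representation on one-forms, and check Rosati self-adjointness by a second matrix computation. Your Riemann--Hurwitz computation of $g(Z)=6$, via $\C(Z)=\C(C)\bigl(\sqrt{\Delta}\bigr)$ with $\Delta=4t^3-4t^2+1$ and the biquadratic cross-check, is a rigorous and welcome substitute for the paper's appeal to computer algebra; note that $\Delta$ is exactly the denominator appearing in the paper's Equation (16), which corroborates your branch locus.

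Three points need attention. First, the explicit second coordinate of $\psi$ cannot simply be ``granted'': the theorem concerns a specific $T$, and every subsequent step requires an exact square root $y$ of $h(x)$ in $K(Z)$; the paper supplies $y = u/t^3-(t+1)ux/t^3$, and without some such formula there is nothing to certify (the symmetry of $t^2x^2-x-t+1$ under $t\leftrightarrow x$ gives a second map to $\Proj^1$, but the nontrivial content is that it lifts to $C$). Second, you have the variance backwards: since $\Omega$ is contravariant while $\Jac$ is covariant, the action of $T=\psi_*\circ\phi^*$ on one-forms is $\phi_*\circ\psi^*$, i.e., pull back along $\psi$ and take $\tr_{K(Z)/\phi^*K(C)}$; your recipe (pull back along $\phi$, trace along $\psi$) computes the matrix of $T^\dagger$ instead. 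This happens to be harmless here only because $T$ and $T^\dagger$ always share a minimal polynomial and you verify $T=T^\dagger$ anyway, but as written the computation certifies the wrong endomorphism. Third, your positivity argument for self-adjointness tacitly assumes that $\Q[T]$ is stable under the Rosati involution, which is not known a priori: if $\End^0(\Jac(C))$ were strictly larger than $\Q(\sqrt5)$, then $T^\dagger$ could lie outside $\Q[T]$ altogether (as happens for non-symmetric embeddings of a real quadratic field into $M_2(\Q)$). Your fallback --- directly computing both matrices on $\Omega(C)$ and checking they agree --- is the correct argument and is what the paper does; the abstract positivity argument should be dropped or supplemented by a proof that $\Q[T]$ is $\dagger$-stable.
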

\noindent In Section \ref{sec:examples} we give several other examples
of correspondences on particular genus two curves of varying
complexity.  In Section \ref{sec:disc5family}, we describe how to
implement our method in families and compute a correspondence for the
entire Hilbert modular surface $Y(\sO_5)$.

\paragraph{Divisor classes supported at eigenform zeros.} 
In the universal\footnote{Technically, it would be more accurate to
  use the term ``universal'' to describe the family over the stack
  whose underlying coarse moduli space is the Hilbert modular
  surface. However, we will indulge in this mild abuse of notation
  throughout this paper.} Jacobian over the space $\M_{g,n}(\sO)$,
there is a natural class of multisection obtained from $\sO$-linear
combinations of divisor classes supported at eigenform zeros and
marked points.  Filip recently showed that such divisors play a
pivotal role in the behavior of geodesics in moduli space \cite{F}.
As an application of our equations for real multiplication, we prove a
theorem about such divisors over $\M_{2,1}(\sO_5)$ and prove a
conjecture of Wright on dynamics over the moduli space of curves.

In the universal Jacobian over $\M_{2,1}(\sO_5)$, let $L$ be the
multisection whose values at the curve $C$ marked at $P
\in C$ are divisors of the form
\begin{equation}
\label{eqn:multisection}
(P-Z_1) - T \cdot (Z_2 - Z_1) \in \Jac(C)
\end{equation}
where $T$ is a Rosati invariant endomorphism of $\Jac(C)$ satisfying $T^2-T-1 =0$ and $Z_1$ and $Z_2$
are the zeros of a $T$-eigenform $\omega$ on $C$.  The various choices of endomorphism $T$, eigenform $\omega$ and ordering of the zeros of $\omega$ give four points in $\Jac(C)$ for generic $(C,P) \in \M_{2,1}(\sO_5)$.  The vanishing of
$L$ defines a closed subvariety of $\M_{2,1}(\sO_5)$:
\begin{equation}
\M_{2,1}(\sO_5;L) = \left\{ (C,P) \in \M_{2,1}(\sO_5) : \parbox[c]{8em}{some branch of $L$ vanishes at $(C,P)$ } \right\}.
\end{equation}
\noindent One might expect $\M_{2,1}(\sO_5;L)$ to be a curve in the
threefold $\M_{2,1}(\sO_5)$ since the relative dimension of the
universal Jacobian is two.  We use our equations for real
multiplication to show that $\M_{2,1}(\sO_5;L)$ is unexpectedly large.
\begin{theorem}
\label{thm:eformzeros}
The space $\M_{2,1}(\sO_5;L)$ is an irreducible surface in $\M_{2,1}$.
\end{theorem}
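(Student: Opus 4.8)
Forgetting the marked point presents $\M_{2,1}(\sO_5)$ as a threefold fibred over the Humbert surface $\M_2(\sO_5)$, so, since $L$ takes values in the universal Jacobian, of relative dimension two, a na\"ive count predicts that $\M_{2,1}(\sO_5;L)$ is a curve. The point of the theorem is that the vanishing of a branch of $L$ imposes only one genuine condition: the element $T\cdot(Z_2-Z_1)\in\Jac(C)$ of \ref{eqn:multisection} depends only on $C$ and on the chosen branch, not on $P$, so $(C,P)$ lies on that branch precisely when the degree-one class $T\cdot(Z_2-Z_1)+[Z_1]\in\Pic^1(C)$ is effective, and then $P$ is the unique point representing it. My plan is thus: (i) prove that $T\cdot(Z_2-Z_1)+[Z_1]$ is generically effective for at least one choice of eigenform and of an ordering of its zeros; (ii) solve for $P$ and deduce that $\M_{2,1}(\sO_5;L)$ is two-dimensional; (iii) prove irreducibility.

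For (i) and (ii) I would use the explicit universal curve over the Hilbert modular surface $Y(\sO_5)$ of \cite{EK} together with the correspondence constructed in Section~\ref{sec:disc5family}, which gives a rational model, in Mumford coordinates on $\Jac(C)$, for the endomorphism $T=\psi_*\circ\phi^*$. Diagonalizing the matrix of $T^*$ on $H^0(C,\Omega^1)$, computed over $\mathrm{Frac}(Y(\sO_5))$ as in \cite{KM} and having characteristic polynomial $x^2-x-1$, yields the two eigenforms $\omega=(at+b)\,dt/u$; the zero divisor of such an $\omega$ consists of the two hyperelliptic conjugates over $t=-b/a$. Substituting into the standard addition law on $\Jac(C)$ one computes $T\cdot(Z_2-Z_1)$ and verifies, over the function field, that $T\cdot(Z_2-Z_1)+[Z_1]$ is effective; solving for the marked point then gives $P$ explicitly, defined over the quadratic extension obtained by adjoining $\sqrt{f(-b/a)}$. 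This realizes $\M_{2,1}(\sO_5;L)$, up to birational equivalence, as a degree-four cover of $\M_2(\sO_5)$, hence as a surface in $\M_{2,1}$.

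For (iii), since $\M_2(\sO_5)$ is irreducible it suffices to show that this degree-four cover has irreducible generic fibre, equivalently that the four branches of $L$ form a single orbit under $\mathrm{Gal}$ over $\mathrm{Frac}(\M_2(\sO_5))$. The two eigenforms are interchanged by the deck transformation of $Y(\sO_5)\to\M_2(\sO_5)$, which corresponds to $\sqrt5\mapsto-\sqrt5$ and is nontrivial on function fields, while the two orderings of the zeros of a fixed eigenform are interchanged by the sign of $\sqrt{f(-b/a)}$ --- geometrically, by the monodromy of a loop around the Teichm\"uller curve $W_5\subset\M_2(\sO_5)$ along which the eigenform acquires a double zero at a Weierstrass point. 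Transitivity on the four branches then follows once one checks, from the explicit equations, that neither $5$ nor $f(-b/a)$ is a square in the appropriate function field; alternatively, irreducibility can be confirmed directly by a primary decomposition of the defining ideal in explicit affine coordinates (feasible since $\M_2(\sO_5)$ and $Y(\sO_5)$ are rational, by \cite{EK}).

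The main obstacle is step (i): proving rigorously, rather than merely numerically, that $T\cdot(Z_2-Z_1)+[Z_1]$ is effective. This is the mechanism responsible for the unexpected jump in dimension; it is not forced by any dimension count but rests on special features of eigenform configurations --- the two zeros being hyperelliptic conjugates and $\omega$ being an eigenvector of $T^*$ --- and the route I would take to prove it is via the explicit equations, where it reduces to a finite but substantial calculation with Mumford representatives over a rational function field. The remaining issues --- the bookkeeping of the four branches, the degeneration along $W_5$ where the description of $P$ breaks down, and the non-squareness statements needed for (iii) --- are more routine, though they must be treated with care.
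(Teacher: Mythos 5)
Your proposal is correct in substance and, for the decisive step, coincides with the paper's proof: the paper's Proposition~\ref{prop:section} is exactly your observation that, since $T\cdot(Z_2-Z_1)$ in Equation~\ref{eqn:multisection} does not involve $P$, a branch of $L$ vanishes at $(C,P)$ precisely when the degree-one class $[Z_1]+T\cdot(Z_2-Z_1)$ is effective, in which case $P$ is unique because a smooth genus two curve admits no degree one map; and generic effectivity is proved exactly as you propose, by exhibiting an explicit marked point $P(z,q)$ on the universal curve of Theorem~\ref{thm5} and verifying $L_\gamma=0$ using the correspondence of Section~\ref{sec:disc5family} and the group law (the paper then upgrades ``generic'' to ``everywhere'' by properness of the forgetful map over the irreducible base). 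You correctly single out this computation as the crux and as the source of the unexpected jump in dimension.

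Where you diverge is irreducibility. You propose to check that the branches of $L$ form a single Galois orbit over the function field of $\M_2(\sO_5)$, which requires both the non-squareness of $5$ (the deck transformation of $Y(\sO_5)\to\M_2(\sO_5)$) and a monodromy statement exchanging the two zeros of a fixed eigenform. The paper sidesteps the second verification: it passes to the cover $\M_{2,1}^{\mathrm{ze}}(\sO_5)$ marking an eigenform zero, notes that $\M_2^{\mathrm{ze}}(\sO_5)$ is birational to the irreducible surface $Y(\sO_5)$ --- the two zeros of a given eigenform are exchanged by the hyperelliptic involution $\eta$ and hence define the \emph{same} moduli point, which is also why your $\sqrt{f(-b/a)}$ never genuinely enlarges the function field --- and then uses the symmetry $L_\gamma(C,P,Z)=L_{1-\gamma}(C,P,\eta(Z))$ to see that the single section $L_\gamma$ already surjects onto $\M_{2,1}(\sO_5;L)$. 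Irreducibility is then immediate from ``the finite image of an irreducible variety is irreducible,'' with no transitivity computation. Your route would succeed, but it proves more than is needed and does by explicit monodromy what the hyperelliptic involution gives for free.
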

To relate Theorem \ref{thm:eformzeros} to dynamics, recall that
$\M_{g,n}$ carries a Teichm\"uller metric and that every vector
tangent to $\M_{g,n}$ generates a complex geodesic, i.e., a
holomorphic immersion $\hh \to \M_{g,n}$ which is a local isometry.
McMullen proved that the locus $\M_2(\sO)$ is the closure of a complex
geodesic in the moduli space $\M_2$ of unmarked genus two curves for
each real quadratic $\sO$ \cite{Mc1}.  A corollary of Theorem
\ref{thm:eformzeros} shows that $\M_{2,1}(\sO_5;L)$ enjoys the same
property.
\begin{theorem}
\label{thm:cxgeod}
There is a complex geodesic $f : \hh \to \M_{2,1}$ with
$\overline{f(\hh)} = \M_{2,1}(\sO_5;L)$.
\end{theorem}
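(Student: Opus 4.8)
The plan is to translate the statement into the language of Teichm\"uller dynamics and then read it off from Theorem~\ref{thm:eformzeros}. Recall that a complex geodesic $f\colon\hh\to\M_{g,n}$ is exactly the projection of an $\SL_2(\R)$-orbit in the bundle $\Omega\M_{g,n}$ of holomorphic one-forms on $n$-marked curves: if $q=\omega^{2}$ is the square of a one-form on the marked curve $(X,P)$, then $f([g])=\pi(g\cdot(X,\omega))$ (where $\hh\cong\mathrm{SO}(2)\backslash\SL_2(\R)$ and $\pi$ is the forgetful map to $\M_{g,n}$) is holomorphic and a local isometry for the Teichm\"uller metric, and every complex geodesic arises this way. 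Moreover, by the fundamental theorems on the $\mathrm{GL}_2^{+}(\R)$-action (orbit closures are affine invariant submanifolds, and such submanifolds are quasi-projective varieties), the $\mathrm{GL}_2^{+}(\R)$-orbit closure $\mathcal M$ of $(X,\omega)$ is an affine invariant submanifold and $\overline{f(\hh)}=\overline{\pi(\mathcal M)}$. So it suffices to produce an affine invariant submanifold $\mathcal M$ in the Hodge bundle over $\M_{2,1}$ whose image $\pi(\mathcal M)$ is dense in $\M_{2,1}(\sO_5;L)$.

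To this end, let $\mathcal N\subseteq\Omega\M_{2,1}$ be the locus of triples $(X,P,\omega)$ for which $\omega$ spans an $\sO_5$-eigenform line on $\Jac(X)$ and, writing $Z_1,Z_2$ for the zeros of $\omega$ and $T$ for a Rosati-invariant generator of $\sO_5$ with $T^{2}-T-1=0$, the divisor class $(P-Z_1)-T\cdot(Z_2-Z_1)$ of \eqref{eqn:multisection} vanishes in $\Jac(X)$ for some choice of $T$ and some ordering of the zeros. Forgetting $\omega$ identifies $\pi(\mathcal N)$ with $\M_{2,1}(\sO_5;L)$, and the fibres of $\pi|_{\mathcal N}$ are unions of eigenform lines, hence at most one-dimensional (scaling $\omega$ does not move its zeros). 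Two properties of $\mathcal N$ are needed. First, $\mathcal N$ is closed and algebraic: the eigenform locus over $\M_2$ is closed by McMullen's work \cite{Mc1}, and the explicit equations of this paper express the $Z_i$ and the action of $T$ algebraically, so the vanishing of $L$ is a closed algebraic condition. Second --- and this is the essential input --- $\mathcal N$ is $\mathrm{GL}_2^{+}(\R)$-invariant: the eigenform locus is $\mathrm{GL}_2^{+}(\R)$-invariant and carries a locally constant action of $\sO_5$ on homology \cite{Mc1}, and the difference of the eigenform zeros together with this action spans a parallel sub-local-system, so that the vanishing of an $\sO_5$-linear combination of eigenform zeros is preserved along $\SL_2(\R)$-orbits in the eigenform locus. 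This is exactly the content of Filip's study of such divisors \cite{F}.

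Now choose $(X,P)$ a generic point of the irreducible surface $\M_{2,1}(\sO_5;L)$, taken outside the (countably many) Teichm\"uller curves contained in $\M_{2,1}$ --- such a point exists since an irreducible complex surface is not a countable union of curves --- and let $\omega$ be a one-form on $X$ with $(X,P,\omega)\in\mathcal N$. Put $\mathcal M=\overline{\mathrm{GL}_2^{+}(\R)\cdot(X,P,\omega)}$. Since $\mathcal N$ is closed and $\mathrm{GL}_2^{+}(\R)$-invariant, $\mathcal M\subseteq\mathcal N$, so $\pi(\mathcal M)\subseteq\M_{2,1}(\sO_5;L)$. Because $(X,P)$ lies on no Teichm\"uller curve, $\mathcal M$ is not the Hodge-bundle lift of a Teichm\"uller curve, and therefore $\dim_{\C}\mathcal M\ge 3$ (a two-dimensional affine invariant submanifold is precisely a single closed $\mathrm{GL}_2^{+}(\R)$-orbit). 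On the other hand $\dim_{\C}\mathcal M\le\dim_{\C}\pi(\mathcal M)+1\le 2+1=3$, using the fibre bound above and Theorem~\ref{thm:eformzeros}. Hence $\dim_{\C}\mathcal M=3$ and $\dim_{\C}\pi(\mathcal M)=2$; as $\mathcal M$ is connected and smooth it is irreducible, so $\overline{\pi(\mathcal M)}$ is an irreducible surface inside the irreducible surface $\M_{2,1}(\sO_5;L)$, whence $\overline{\pi(\mathcal M)}=\M_{2,1}(\sO_5;L)$. Taking $f$ to be the complex geodesic of $(X,P,\omega^{2})$ then gives $\overline{f(\hh)}=\overline{\pi(\mathcal M)}=\M_{2,1}(\sO_5;L)$, which is the claim.

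The principal obstacle is the $\mathrm{GL}_2^{+}(\R)$-invariance of $\mathcal N$: everything else is the standard dictionary between complex geodesics and orbit closures together with a dimension count, but the fact that the condition $(P-Z_1)-T\cdot(Z_2-Z_1)=0$ is dynamically meaningful --- invariant under $\SL_2(\R)$, not merely an algebraic relation --- is precisely the phenomenon isolated in Filip's work, and it is what forces $\M_{2,1}(\sO_5;L)$ to be swept out by a single complex geodesic. A secondary point requiring care is the genericity of the base point: one must ensure $(X,P)$ avoids the lower-dimensional affine invariant submanifolds sitting inside $\mathcal N$ --- the Teichm\"uller curves obtained by marking the Weierstrass curve $W_5$ --- so that $\mathcal M$ attains full dimension $3$ rather than collapsing to a curve.
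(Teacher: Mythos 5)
Your overall strategy is genuinely different from the paper's: you invoke the orbit-closure machinery (Eskin--Mirzakhani--Mohammadi plus Filip's algebraicity theorem) and run a dimension count against Theorem~\ref{thm:eformzeros}, whereas the paper constructs the geodesic explicitly as the composition of the zero-marked lift $f_\omega^{\mathrm{ze}}$ with the algebraic section of Proposition~\ref{prop:section}, and proves geodesicity either by the Kobayashi-metric argument for holomorphic sections or by matching relative periods. The difficulty is that your argument has a genuine gap exactly at the step you yourself flag as ``the essential input'': the $\mathrm{GL}_2^+(\R)$-invariance of the locus $\mathcal N$. Filip's theorem does not give you this. It says that every affine invariant submanifold \emph{satisfies} conditions of the form ``real multiplication plus $\sO$-linear torsion relations among eigenform zeros and marked points''; it does not say that an arbitrary locus cut out by such a relation is $\mathrm{GL}_2^+(\R)$-invariant, and the introduction of the paper is explicit that Theorem~\ref{thm:cxgeod} is the first \emph{example} realizing Filip's characterization with a ring larger than $\Z$ --- i.e.\ the invariance here is the new content, not something to be quoted. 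Concretely, the condition $(P-Z_1)-T\cdot(Z_2-Z_1)=0$ in $\Jac(X)$ decomposes under Abel--Jacobi into two components, one along the $\omega$-eigenline and one along the Galois-conjugate eigenline. Only the $\omega$-component is expressed through periods of $\omega$ and hence transforms equivariantly under the standard $\mathrm{GL}_2^+(\R)$-action; the conjugate component involves periods of the other eigenform, which are not moved by the standard representation, so its vanishing is not ``parallel'' in any obvious sense. Your sentence about the zeros spanning a parallel sub-local-system glosses over precisely this.

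The repair is essentially what the paper does: Proposition~\ref{prop:section} produces, purely algebraically via the explicit correspondence, a single-valued section $P(C,Z)$ solving $L_\gamma=0$, and the relative-period identity
$\int_{Z_\tau}^{P_\tau}\omega_\tau \equiv \gamma\int_{Z_\tau}^{\eta(Z_\tau)}\omega_\tau$ modulo absolute periods (the $\omega$-component of the vanishing, used only as a \emph{necessary} condition) then forces the algebraic section to coincide with the image of $P$ under the Teichm\"uller mappings along the geodesic $f_\omega$. That simultaneously proves the geodesicity of the marked-point section and the invariance you need, without ever having to control the conjugate eigenline component directly. So your outline is salvageable, but only by importing the explicit section and the period comparison --- at which point one has reproduced the paper's proof and the orbit-closure machinery becomes an optional wrapper. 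A minor further point: your lower bound $\dim_\C\mathcal M\ge 3$ requires knowing there are only countably many closed $\mathrm{GL}_2^+(\R)$-orbits in the relevant stratum of $\Omega\M_{2,1}$ so that a generic point of the irreducible surface avoids their projections; this is true but should be attributed (it follows from the finiteness/countability statements accompanying Eskin--Mirzakhani--Mohammadi), none of which the paper needs.
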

\noindent In other words, there is a dynamically natural way to choose
finitely many points on each curve in $\M_2(\sO_5)$.  This was
originally conjectured by Wright, and will be proven by other means in the
forthcoming paper \cite{EMMW}.

By Filip's characterization of the behavior of complex geodesics in
moduli space \cite{F}, every complex geodesic is dense in a subvariety
of $\M_{g,n}(\sO)$ characterized by $\sO$-linear relations among
divisor classes supported at eigenform zeros and marked points.  Theorem
\ref{thm:cxgeod} is the first example of such a subvariety where a
relation involving a ring strictly larger than $\Z$ appears.  

\paragraph{Prior work on equations for real multiplication.}  
Several authors \cite{Wi1,Wi2,Sa1,HS1,HS2,Sa2} have given geometric
descriptions of real multiplication based on Humbert's work on
Poncelet configurations of conics \cite{Hu,vdG,Ja}. Our work combines
explicit examples or families of genus $2$ curves (obtained from the
equations for Hilbert modular surfaces computed in \cite{EK}) with the
method of van Wamelen outlined in \cite[Section 3]{vW2}.  The latter
uses high-precision numerical sampling on the Jacobian and subsequent
linear algebra to find explicit algebraic descriptions of isogenies
for Jacobians of genus $2$ curves. We describe the isogenies through
correspondences; see \cite{Sm} for other examples of equations for
real multiplication obtained through correspondences.  Furthermore, we
address the challenge of computing correspondences in families,
thereby giving a description of real multiplication for the universal
abelian surface over a Hilbert modular surface.

\paragraph{Outline.} 
In Section \ref{sec:background}, we recall some basic facts about
Jacobians of curves, their endomorphisms and correspondences.  In
Section \ref{sec:discovery}, we describe our method for finding the
equations of a correspondence associated to a Jacobian endomorphism.
In Section \ref{sec:oneforms}, we describe how to compute the induced
action on one-forms and thereby certify the equations obtained by the
method in Section \ref{sec:discovery}.  In Section \ref{sec:examples},
we give several examples of varying complexity of explicit
correspondences.  In Section
\ref{sec:disc5family}, we address challenges to implementing our
algorithm in families, and describe a correspondence for the entire
Hilbert modular surface for discriminant $5$.  In Section
\ref{sec:eigenformzeros}, we discuss the applications to dynamics and
prove Theorems \ref{thm:eformzeros} and \ref{thm:cxgeod}.

\paragraph{Computer files.}  
Auxiliary files containing computer code to verify the calculations in
this paper are available from
\url{http://arxiv.org/abs/1602.01924}. To access these, download the
source file for the paper. It is a {\sf tar} archive, which can be
extracted to produce not only the \LaTeX~ file for this paper, but
also the computer code. The text file {\sf README.txt} gives a brief
guide to the various auxiliary files.

\paragraph{Acknowledgments.}  
We thank Curt McMullen for many helpful conversations and suggestions,
Noam Elkies and Alex Wright for useful comments on an earlier draft of
this paper, and the anonymous referees for a careful reading of the
paper and numerous helpful comments. REM was supported in part by
National Science Foundation grant DMS-1103654.

\section{Background} \label{sec:background}
In this section, we recall some general facts about curves, their
Jacobians and algebraic correspondences. We will work over the complex
numbers. The basic reference for this section is \cite{BL}.

\paragraph{Jacobians.}  
Let $C$ be a smooth projective curve of genus $g$ over $\C$.  The
holomorphic one-forms on $C$ form a $g$-dimensional vector space
$\Omega(X)$.  Integration gives rise to an embedding $H_1(C,\Z) \to
\Omega(C)^*$ and the image of this embedding is a lattice.  The
quotient
\[ \Jac(C) = \Omega(C)^* / H_1(C,\Z) \]
is a compact, complex torus called the Jacobian of $C$. The symplectic
intersection form on $H_1(C,\Z)$ induces a principal polarization on
the torus $\Jac(C)$ (i.e., an isomorphism of this abelian variety with
its dual).

\paragraph{The Abel-Jacobi map.}  
Let $\Pic^0(C)$ denote the group of degree zero divisors on $C$ up to
linear equivalence. Integration gives rise to an isomorphism of
groups
\[ AJ: \Pic^0(C) \to \Jac(C). \]
This is the {\em Abel-Jacobi} map. When $\Pic^0(C)$ is thought of as
the complex points of the Picard variety of $C$, this map is an
isomorphism of abelian varieties over $\C$.

\paragraph{The theta divisor.}  
Choosing a base point $P_0 \in C$ allows us to define a birational map
$\xi$ from the $g$th symmetric power of $C$ to $\Pic^0(C)$ via
the formula
\[ \xi\left(  \left\{ P_1,\dots,P_g \right\} \right)  = \Big(\sum_i P_i\Big) - gP_0. \]
The divisor $ \left\{ S \in \Sym^g(C) : P_0 \in S \right\}$ gives rise
to a divisor $\Theta$ on $\Jac(C)$ called the {\em theta divisor}.

\paragraph{Pullback and pushforward.} 
Now consider a holomorphic map $\psi : Z \to C$ between curves.  The map
$\psi$ induces a map $\Omega(C) \to \Omega(Z)$ whose dual covers
a holomorphic homomorphism
\[ \psi_* : \Jac(Z) \to \Jac(C). \]
Under the identification of Jacobians with degree zero divisors via
the Abel-Jacobi map, $\psi_*$ corresponds to the pushforward of
divisors, i.e.,
\begin{equation}
\label{eqn:pushforward}
 \psi_*\Big( \sum_i P_i - \sum_i Q_i \Big) = \sum_i \psi(P_i) - \sum_i \psi(Q_i). 
\end{equation}
We call $\psi_*$ the {\em pushforward map}.  The map $\psi$ also induces a
{\em pullback map}
\[ \psi^* : \Jac(C) \to \Jac(Z) \]
obtained as the dual map to $\psi_*$ by identifying the Jacobians of
$C$ and $Z$ with their corresponding duals via their principal
polarizations. For non-constant $\psi$, we can obtain $\psi^*$ at the
level of divisors by summing along fibers, i.e.,
\begin{equation}
\label{eqn:pullback}
 \psi^* \Big( \sum_i P_i - \sum_i Q_i \Big) = \sum_i \psi^{-1} \left( P_i \right) -\sum_i \psi^{-1}(Q_i),
\end{equation}
while for a constant map $\psi^* = 0$. The composition $\psi_* \circ
\psi^*$ is the multiplication by $\deg(\psi)$ map on $\Jac(C)$.

\paragraph{Correspondences.}  
A {\em correspondence} $Z$ on $C$ is a holomorphic curve in $C \times
C$.  Fix a correspondence $Z$ on $C$ and let $\phi = \pi_1$ and $\psi
= \pi_2$ be the the two projection maps from $Z$ to $C$. The
correspondence $Z$ gives rise to an endomorphism of $\Jac(C)$ via the
formula $T = \psi_* \circ \phi^*$.  From Equations
\ref{eqn:pushforward} and \ref{eqn:pullback}, we see that $T$ acts on
divisors of the form $P-Q$ by the formula
\begin{equation}
\label{eqn:Tondiv}
T(P-Q) = \psi(\phi^{-1}(P)) - \psi(\phi^{-1}(Q)).
\end{equation}
Such divisors generate $\Pic^0(C)$, so Equation \ref{eqn:Tondiv}
determines the action of $T$ on $\Pic^0(C)$.

Conversely, every endomorphism $T$ of the Jacobian endomorphism arises
via a correspondence. To see this, we embed $C$ in $\Jac(C)$ via the
map $P \mapsto P-P_0$ (note that when the genus of $C$ is two, the
resulting cycle is just the theta divisor $\Theta$).  Since the image
of $C$ under this embedding generates the group $\Jac(C)$, the
restriction of $T$ to $C$ determines $T$. This map $T|_C$ is a
$C$-valued point of $\Jac(C) \cong \Pic^0_C$, and by the functorial
property of the Picard variety, it corresponds to a line bundle $\sL$
on $C \times C$, whose fibers $\sL|_{C \times P}$ are all of degree
$0$. Then we can take $Z$ to be an effective divisor corresponding to
the line bundle $\sL \otimes \pi_1^*(\sO_C(gP_0))$. Concretely, the
intersection of $Z$ with $P \times C$ consists of points $(P,Q_i)$
with $Q_1,\dots,Q_g \in C$ satisfying $T\cdot (P-P_0) = \sum_i Q_i
-gP_0$. Using this fact, it is easily checked that the two
constructions are inverse to each other.

\section{Computing equations for correspondences} \label{sec:discovery}
In this section, we describe our method for discovering
correspondences.  The methods in this section are numerical and rely
on floating point approximation.  Nonetheless, the correspondences we
obtain are presented by equations with exact coefficients lying in a
number field.  In Section \ref{sec:oneforms}, we will describe how to
certify these equations using only rigorous integer arithmetic in
number fields to prove theorems about real multiplication on genus two
Jacobians.

\paragraph{Setup.}
Our starting point is a fixed curve $C$ of genus two {\em known} to
have a Jacobian endomorphism $T$ generating real multiplication by the
real quadratic order $\sO_D$ of discriminant $D$.  Such curves can be
supplied by the methods in \cite{EK}.  We assume that $C$ and $T$ are
defined over a number field $K$ and that $C$ is presented as a
hyperelliptic curve
\begin{equation}
C : u^2 = h(t) \mbox{ with } h \in K[t] \mbox{ monic},\, \deg(h) = 5.
\end{equation}
We fix an embedding $K \subset \C$ so that we can base change to $\C$
and work with the analytic curve $C^{an}$ and the analytic Jacobian
$J^{an} = \Jac(C^{an})$. For simplicity we have assumed in this
section that $h$ is monic of degree $5$ so that $C$ has a $K$-rational
Weierstrass point $P_0$ at infinity.  We discuss below how to handle
the sextic case (see Remark \ref{rmk:sextic}).

\paragraph{Analytic Jacobians in Magma.} 
The computer system {\sf Magma} has several useful functions for
working with analytic Jacobians and their endomorphisms, implemented
by van Wamelen. An excellent introduction may be found in \cite{vW2},
and extensive documentation is available in the {\sf Magma} handbook
\cite{Mag}. The relevant functions for us are:
\begin{enumerate}[(a)]
\item {\sf AnalyticJacobian} (see also {\sf BigPeriodMatrix}):
  computes the periods of $dt/u$ and $t\,dt/u$ in $\Omega(C)$,
  yielding a numerical approximation to the period matrix
  $\Pi(C^{an})$ and a model for the analytic Jacobian $J^{an} =
  \C^2/\Pi(C^{an}) \cdot \Z^4$.
\item {\sf EndomorphismRing}: computes generators for the endomorphism
  ring of $J^{an}$.  Each endomorphism $T^{an}$ is presented as a pair
  of matrices $T^{an}_\Omega \in M_2(\C)$ and $T^{an}_\Z \in M_4(\Z)$
  satisfying $\Pi(C^{an}) \cdot T_\Z = T_\Omega \cdot \Pi(C^{an})$ (up
  to floating point precision).
\item {\sf ToAnalyticJacobian}: computes the Abel-Jacobi map by
  numerical integration.
\item {\sf FromAnalyticJacobian}: computes the inverse of the
  Abel-Jacobi map using theta functions.
\end{enumerate}

\paragraph{Discovering correspondences.}
We compute equations defining the correspondence $Z$ on
$C$ associated to $T$ is as follows.
\begin{enumerate}[(1)]
\item Compute the analytic Jacobian $J^{an}$ and an endomorphism
  $T^{an}$ generating real multiplication.
\item Choose low height points $P_i = (t_i,u_i) \in C^{an}$ with $t_i
  \in K$.
\item For each $i$, numerically compute points $R_i = (t(R_i),u(R_i))$
  and $Q_i=(t(Q_i),u(Q_i))$ in $C^{an}$ such that
\[ AJ(Q_i+R_i-2P_0) = T^{an} \cdot (AJ(P-P_0)). \]
\item For each $i$, compute the exact coefficients of the polynomial
  $F_i(x) = (x-t(Q_i))(x-t(R_i))$ in $K[x]$ using LLL.
\item Interpolate to determine a polynomial $F \in K(t)[x]$ which
  specializes to $F_i$ at $t=t(P_i)$ and let $Z$ be the degree two
  cover defined by $F$, i.e., with
\begin{equation}
K(Z) = K(C)[x]/(F).
\end{equation}
\end{enumerate}
To realize $Z$ as a divisor in $C \times C$, we need to compute a
square root for $h(x)$ in $K(Z)$.  For small examples, this can be
done by working in the function field of $Z$.  In general, we revisit
steps $(3)$ and $(4)$ and do the following.
\begin{enumerate}[(1)]
\setcounter{enumi}{5}
\item For each $i$, determine $u(Q_i)$ as a $K$-linear combination of
  $u(P_i)$ and $u(P_i)t(Q_i)$,
\item Interpolate to determine a rational function $y \in K(Z)$ which
  is a $K(t)$-linear combination of $u$ and $ux$ and equals $u(Q_i)$
  when specialized to $(t,u,x) = (t(P_i),u(P_i),t(Q_i))$.
\end{enumerate}
\begin{remark} 
Typically we use {\sf AnalyticJacobian} and {\sf EndomorphismRing} to carry out step (1), and {\sf ToAnalyticJacobian} and {\sf FromAnalyticJacobian} to carry out step (3).  The remainder of the algorithm requires only the matrix $T^{an}_\Omega$ (and not $T^{an}_\Z$) which could also be obtained using the algorithm in \cite{KM} rather than {\sf EndomorphismRing}. 
\end{remark}
\begin{remark}
We do not carry out a detailed analysis of the floating point
precision needed or the running time of our algorithm.  We remark that
$400$ digits of precision were sufficient for the examples in this
paper and that the machine used to perform the computations in this
paper (4 GHz, 32 GB RAM) completed the entire sampling and
interpolation process for individual correspondences in minutes.  For
our most complicated example, presented in Theorem \ref{thm4}, CPU
time was under two minutes.

To be able to carry out these steps, we need a large supply of sample
points, and sufficient precision.  As far as the number of sample
points needed for interpolation to find the equation of $Z$, we
closely follow the argument of \cite[Section 3]{vW1}. There it is
observed that the coefficients of $F$ (which are $x_1+x_2$ and $x_1
x_2$ in the notation of \cite{vW1}) are rational functions in $t$ and
have degrees which are bounded by the intersection number of
$\alpha(\Theta)$ and $2 \Theta$.  In our case, this equals
$\tr_{\Q(\sqrt{D})/\Q}(\alpha^2)$. Consequently, we choose $\alpha \in
\sO_D$ for which the trace is minimized: $\alpha = \pm \sqrt{D}/2$ if
$D$ is even, and $\alpha = (\pm 1 \pm \sqrt{D})/2$ if $D$ is odd. In
practice, since the degrees of the functions involved may be quite a
bit smaller than the upper bound, it is more efficient both in terms
of time and computer memory to choose a small sample size and attempt
to see if the computation of $Z$ succeeds.
\end{remark}

\begin{remark}
  From the equations for $Z$ and the maps $\phi$, $\psi$ to $C$, we can
  compute the action of $T$ on divisors of the form $P-Q$ using
  Equation \ref{eqn:Tondiv}.  We can then use standard equations
  \cite{CF} for the group law on $\Jac(C)$ to extend this formula to
  arbitrary divisor classes of degree zero.  Similarly, we can compute
  the algebraic action of an arbitrary element $m+nT \in \Z[T]$ of the
  real quadratic order using formulas for the group law.
\end{remark}

\paragraph{Example.}  
We conclude this section with an example for discriminant $5$.  Let $K
= \Q$ and let $C$ be the genus two curve defined by
\begin{equation}
\label{eqn:Cex1}
C : u^2 = h(t) \mbox{ where } h(t) = t^5 - t^4 + t^3 + t^2 - 2t + 1.
\end{equation}
The Jacobian of $C$ corresponds to the point $(g,h) = \left(
-\tfrac{8}{3},\tfrac{47}{2} \right)$ in the model computed in
\cite{EK} for the Hilbert modular surface $Y(\sO_5)$ parametrizing
principally polarized abelian surfaces with an action of $\sO_5$.  By
the method outlined above, we discover that the degree two branched
cover $f: Z \to C$ defined by
\begin{equation}
\label{eqn:Fex1}
F(x) = 0 \mbox{ where } F(x) = t^2 x^2-x-t+1 \in K(C)[x]
\end{equation}
is a correspondence associated to real multiplication by $\sO_5$.  In fact, setting
\begin{equation}
\label{eqn:yex1}
y = \frac{1}{t^3} u-\frac{t+1}{t^3} u x \in K(Z)
\end{equation}
we find that $y$ is a square root of $h(x)$ in $K(Z)$, and the map
$\psi(t,u,x) = (x,y)$ defines a second map $Z \to C$.  In Section
\ref{sec:oneforms} we will prove that $T = \psi_* \circ \phi^*$ generates
real multiplication by $\sO_5$, thereby certifying these equations for
$Z$. We note here that the degree of $\psi$ is $2$ since, fixing $x$,
there are two choices for $t$ satisfying Equation \ref{eqn:Fex1}, and
$u$ is determined by $(x,y,t)$ by Equation \ref{eqn:yex1}.  The curve
$Z$ has genus $6$, as can be readily computed in {\sf Magma} or {\sf
  Maple}.
\begin{remark}
It would be interesting to use the tools in this paper to study the
{\em geometry} of correspondences over Hilbert modular surfaces.  In
particular, one might explore how the geometry of $Z$ varies with $C$
and $T$, and how $Z$ specializes at curves $C$ lying on arithmetically
and dynamically interesting loci such as Teichm\"uller curves and
Shimura curves.  For instance, compare Theorems \ref{thm:ex1} and
\ref{thm5} for discriminant $5$ and Theorems \ref{thm2} and \ref{thm3}
for discriminant $12$.
\end{remark}

\section{Minimal polynomials and action on one-forms}
\label{sec:oneforms}
In this section, we describe how to certify the equations we
discovered by the method in Section \ref{sec:discovery}.  We have now
determined an equation for a curve $Z$ with an obvious degree two map
$\phi: Z \to C$ given by $\phi(t,u,x) = (t,u)$.  We have also computed
equations for a second map $\psi: Z \to C$ given by $\psi(t,u,x) = (x,y)$.

We will now describe how to compute the action $T_\Omega$ of $T =
\psi_* \circ \phi^*$ on $\Omega(C)$.  Since the representation of the
endomorphism ring of $\Jac(C)$ on $\Omega(C)$ is faithful, the minimal
polynomial for $T$ is equal to the minimal polynomial for $T_\Omega$.
Fixing $\omega \in \Omega(C)$, to compute $T_\Omega(\omega)$, we first
pullback along $\psi$ and then pushforward along $\phi$.  The order of
composition is reversed since the functor sending $C$ to the vector
space $\Omega(C)$ is contravariant, whereas the functor sending $C$ to
$\Jac(C)$ is covariant.  Pullbacks are straightforward, and the
pushforward along $\phi$ can be computed from the rule
\begin{equation}
\phi_*( v \,\eta ) = \tr(v) \omega \mbox{ when $\eta = \phi^*(\omega)$ and $v \in K(Z)$.}
\end{equation}
The trace on the right hand side is with respect to the field
extension $K(Z)$ over the subfield isomorphic to $K(C)$ associated to
the map $\phi$.  We now see that
\begin{equation}
\label{eqn:Tomega}
T_\Omega (\omega) = \phi_* \circ \psi^* (\omega) = \tr(\psi^*(\omega)/\phi^*(\omega)) \cdot \omega.
\end{equation}
The trace on the right hand side of Equation \ref{eqn:Tomega} is over
the field extension $K(t,u,x)/K(t,u)$ and can be computed
easily from the equations defining $\phi$.  We return to the example
in the previous section.

\paragraph{Example.}  
Let $Z \subset C \times C$ be the correspondence defined by Equations
\ref{eqn:Cex1}, \ref{eqn:Fex1} and \ref{eqn:yex1}.  Let $\omega_1 =
dt/u$ and $\omega_2 = t\,dt/u$ be the standard basis for $\Omega(C)$.
To compute the action of $T_\Omega$ on $\Omega(C)$, we need to work
with the function field $K(Z)$ and its derivations.  The derivations
form a one dimensional vector space over $K(Z)$.  It is spanned by
both $dx$ and $dt$, and the relation between them is computed by
implicitly differentiating Equation \ref{eqn:Fex1}.  We compute
\begin{equation}
\label{eqn:gsoverfs}
\frac{\psi^*(\omega_1)}{\phi^*(\omega_1)} = \frac{dx/y}{dt/u} = \frac{(-2 t^4 + t^3 - t^2) x + (-2t^4 + 4t^3 - t^2 - t + 1)}{4t^3 - 4t^2 + 1}.
\end{equation}
We now need to compute the trace of the right hand side over $K(t,u)$.
From Equation \ref{eqn:Fex1}, we see that the trace of $x$ is $1/t^2$,
and therefore the trace of the right hand side of Equation
\ref{eqn:gsoverfs} is $(1-t)$.  We conclude that
\begin{equation}
T_\Omega(dt/u) = (1-t) dt/u.
\end{equation}
Similarly, we compute that $T_\Omega(t\,dt/u) = -dt/u$ and hence the
matrix for $T_\Omega$ is
\begin{equation}
M = \begin{pmatrix}
1 & -1 \\
-1 & 0
\end{pmatrix}.
\end{equation}
The minimal polynomial for $M$, hence for $T_\Omega$ and $T$ as well,
is $T^2 - T -1$.  We conclude that $T$ generates a ring $\Z[T]$
isomorphic to $\sO_5$.

\paragraph{Rosati involution.}  
The adjoint for $T$ with respect to the Rosati involution is the
endomorphism
\begin{equation} T^\dagger = (\psi_* \circ \phi^*)^\dagger = \phi_* \circ \psi^*. \end{equation}
By computing the action of $T^\dagger$ on $\Omega(C)$ by the procedure
above, we verify that $T^\dagger_\Omega = T_\Omega$ and conclude that
$T = T^\dagger$ is self-adjoint with respect to the Rosati involution.

We note that the proof of Theorem~\ref{thm:ex1} stated in the
introduction is now complete.

\section{Further examples}
\label{sec:examples}
We now describe several other examples of the results one can obtain via
our method. We choose relatively simple curves and small discriminants
for purposes of illustration. For instance, the first two examples
have Weierstrass points at $\infty$, and the others have two rational
points at $\infty$.  Each of the theorems stated in this section are
proved by carrying out an analysis similar to our analysis of the
curve defined by Equation \ref{eqn:Cex1} in Sections
\ref{sec:discovery} and \ref{sec:oneforms}.  We provide computer code
in the auxiliary files to carry out these analyses.

Our first example involves a quadratic ring of slightly larger
discriminant.
\begin{theorem} \label{thm2}
Let $C$ be the curve defined by $u^2 = t^5-6t^4 + 15 t^3-22 t^2 + 17
t$ and let $\phi: Z \to C$ be the degree two branched cover defined by
\[
t(t^2-3t+1)^2x^2 - (4t^5-23t^4+46t^3-37t^2+6t+17) x + 4t(t^4-6t^3+15t^2-22t+17) = 0.
\]
The curve $Z$ is of genus $12$ and admits a map $\psi : Z \to C$ of
degree $5$.  The induced endomorphism $T = \psi_* \circ \phi^*$ of $\Jac(C)$
satisfies $T^2-3=0$ and generates real multiplication by
$\sO_{12}$.
\end{theorem}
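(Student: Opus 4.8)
The plan is to follow verbatim the argument that proved Theorem~\ref{thm:ex1} in Sections~\ref{sec:discovery} and \ref{sec:oneforms}. Let $F\in\Q(C)[x]$ denote the displayed quadratic, so that by construction $Z$ has function field $\Q(C)[x]/(F)$ and $\phi(t,u,x)=(t,u)$ is a degree two map. First I would produce a square root $y$ of $h(x)=x^5-6x^4+15x^3-22x^2+17x$ inside $\Q(Z)$; exactly as in steps (6)--(7) of Section~\ref{sec:discovery}, such a $y$ is found --- and then certified by squaring it --- as an explicit $\Q(t)$-linear combination of $u$ and $ux$. Granting this, $\psi(t,u,x)=(x,y)$ is a morphism $Z\to C$, and its degree is $5$ by the same counting argument as in the discriminant $5$ example: fixing a generic value of $x$, the equation $F=0$ has degree $5$ in $t$ (leading coefficient $(x-2)^2$) and hence $5$ roots, and $u$ is then determined by $(t,x,y)$ through the formula for $y$. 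The genus of $Z$ equals $12$ by Riemann--Hurwitz applied to $\phi$ together with the ramification locus of $F$; this is also immediate to check in {\sf Magma}.

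Next I would compute the matrix of $T_\Omega=\phi_*\circ\psi^*$ on $\Omega(C)$ by means of Equation~\ref{eqn:Tomega}. Implicit differentiation of $F=0$ expresses $dx$ as a multiple of $dt$ in the one dimensional space of differentials of $\Q(Z)$, and hence gives $\psi^*(\omega_1)/\phi^*(\omega_1)=(dx/y)/(dt/u)$ and $\psi^*(\omega_2)/\phi^*(\omega_2)=(x\,dx/y)/(t\,dt/u)$ explicitly as elements of $\Q(Z)$; reducing modulo $F$, each has the form $A(t)+B(t)x$ with $A,B\in\Q(t)$. To push forward along $\phi$ I take the trace over the degree two extension $\Q(Z)/\phi^*\Q(C)$: since $F$ is quadratic in $x$, Vieta's formula gives $\tr(x)$ directly from the coefficients of $F$, so $\tr(A+Bx)=2A+B\,\tr(x)$, which must simplify to a polynomial of degree at most one in $t$. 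This yields $T_\Omega(\omega_1)$ and $T_\Omega(\omega_2)$ as constant coefficient combinations of $\omega_1$ and $\omega_2$, i.e.\ a matrix $M\in M_2(\Q)$.

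I expect the characteristic polynomial of $M$ to be $x^2-3$. Since the representation of $\End(\Jac(C))$ on $\Omega(C)$ is faithful, $T$ then satisfies $T^2-3=0$; as $T\notin\Z$, the ring $\Z[T]\cong\Z[\sqrt3]$ is the real quadratic order of discriminant $12$, so $T$ generates real multiplication by $\sO_{12}$. (One may additionally check $T^\dagger_\Omega=T_\Omega$ as in Section~\ref{sec:oneforms}, but this is not required for the statement.) The main obstacle is computational bulk rather than any conceptual difficulty: because $\psi$ has degree $5$ here instead of $2$, the intermediate rational functions in $\Q(Z)$ are much larger than in the discriminant $5$ case, and one must be careful to take the trace over the degree two subfield coming from $\phi$ and not the degree five subfield coming from $\psi$. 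Once that bookkeeping is correct the final step is a trivial $2\times2$ linear-algebra computation over $\Q$; we carry out the symbolic verification in the auxiliary files.
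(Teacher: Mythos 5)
Your proposal is correct and follows essentially the same route as the paper: the authors prove Theorem~\ref{thm2} by the identical recipe used for Theorem~\ref{thm:ex1} — exhibiting $y$ explicitly as a $\Q(t)$-linear combination of $u$ and $xu$ (the formula is given in the remark following the theorem), counting the degree of $\psi$ via the degree of $F$ in $t$, and certifying $T^2-3=0$ through the trace computation of Equation~\ref{eqn:Tomega} over the degree two extension defined by $\phi$, with the symbolic verification delegated to the auxiliary computer files. Your bookkeeping caveat about taking the trace over $\phi^*\Q(C)$ rather than $\psi^*\Q(C)$ is exactly the right point of care, and your identification of the leading coefficient $(x-2)^2$ in $t$ matches the degree $5$ claim.
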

\begin{figure}
   \includegraphics[scale=0.4]{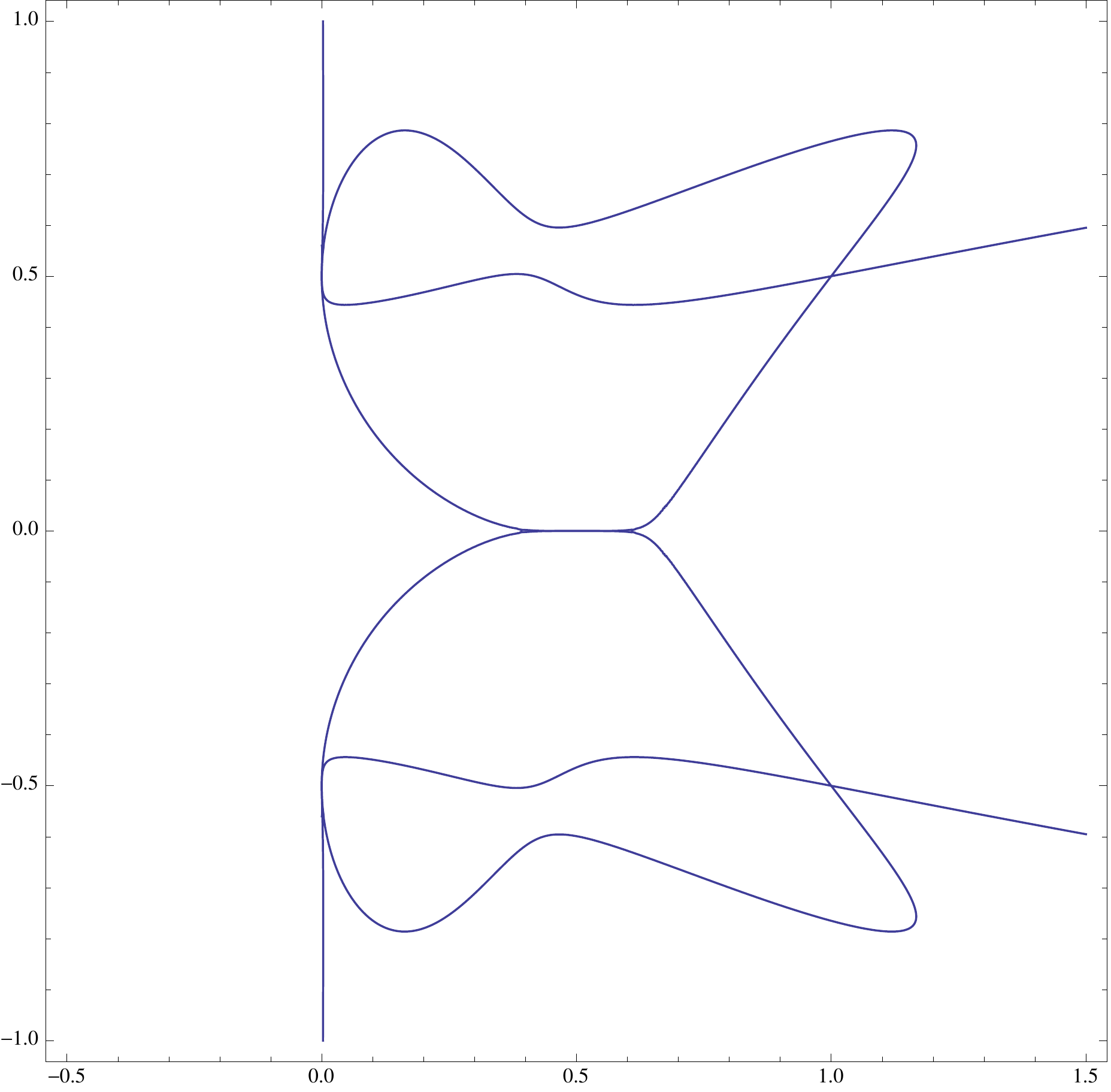} \sfcaption{\label{fig:Zex2}
     We plot the curve $Z$ of Theorem~\ref{thm2} in the
     $(1/x,1/u)$-plane.}
\end{figure}

\begin{remark}
The curve $C$ in Theorem \ref{thm2} corresponds to the point $(e,f) =
\left(\tfrac{34}{27},-\tfrac{5}{3}\right)$ on $Y(\sO_{12})$ in the
coordinates of \cite{EK}. The proof of the theorem proceeds along similar
lines as that of Theorem~\ref{thm:ex1}. The map $\psi: Z \to C$ takes
$(t,u,x)$ to $(x,y)$ where
\[
y =  -\frac{t^6-5t^5+12t^4-21t^3+32t^2-17t-17}{t^2(t^2-3t+1)^3} xu  + \frac{2(t-2)(t^4-2t^3-t^2-2t+17)}{t(t^2-3t+1)^3} u.
\]
The function field of $Z$  is generated by $x$ and $u$. We depict a plane model for $Z$ in Figure \ref{fig:Zex2}.  
\end{remark}

Our next example illustrates how the method developed in this paper
may be used to identify eigenforms and determine points on
Teichm\"uller curves.  Recall that $W_D$ is the moduli space of genus
two eigenforms for $\sO_D$ with a {\em double zero}, and is a disjoint
union of Teichm\"uller curves \cite{Mc1}.
\begin{theorem} \label{thm3}
Let $C$ be the curve $u^2 = t^5 - 2 t^4 - 12 t^3 - 8 t^2 + 52 t + 24$
and let $\phi: Z \to C$ be the degree two branched cover defined by
\begin{align*}
16(t-2)(t+1)^2x^2 -(3t^4+16t^3+12t^2-192t-164) x & \\
 + (9t^5-12t^4-140t^3-48t^2+276t+16) &= 0.
\end{align*}
The curve $Z$ is of genus $11$ and admits a map $\psi: Z \to C$ of degree
$5$. The induced endomorphism $T = \psi_* \circ \phi^*$ of $\Jac(C)$
satisfies $T^2-3=0$ and generates real multiplication by
$\sO_{12}$. Moreover, the moduli point corresponding to $C$ on
$Y(\sO_{12})$ lies on the Weierstrass-Teichm\"uller curve $W_{12}$.
\end{theorem}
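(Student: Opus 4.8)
The plan is to certify the stated correspondence exactly as in the analysis of the discriminant-$5$ example in Sections~\ref{sec:discovery}--\ref{sec:oneforms}, and then to supplement it with one short argument for the Weierstrass-Teichm\"uller assertion. For the first part, the steps are: exhibit the second map $\psi\colon Z\to C$, $\psi(t,u,x)=(x,y)$, by writing down an explicit $y\in K(Z)$ which is a $K(t)$-linear combination of $u$ and $ux$ and satisfies $y^2=h(x)$ (as in the Remark following Theorem~\ref{thm2}); read off $\deg\psi=5$ from the equations, since for a fixed point $(x,y)$ of $C$ the defining quadratic in $x$ has five solutions in $t$ and then $u$ is determined by the formula for $y$; and compute the genus of $Z$ as $11$ by a direct computation (for instance in {\sf Magma}), or via Riemann--Hurwitz for the degree-two map $\phi$.

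The core of the certification is the action on one-forms. Following Equation~\ref{eqn:Tomega}, one differentiates the equation of $Z$ implicitly, expresses each ratio $\psi^*(\omega_i)/\phi^*(\omega_i)$ for $\omega_1=dt/u$, $\omega_2=t\,dt/u$ as an element $A_i+B_i x\in K(Z)=K(C)[x]/(F)$, and takes its trace $2A_i+B_i\cdot(\text{sum of the }x\text{-roots of }F)$ over $K(C)$. This produces an explicit $2\times2$ matrix $M$ over $K$ for $T_\Omega$. One then checks that $M^2=3I$; since the representation of $\End(\Jac C)$ on $\Omega(C)$ is faithful, $T^2-3=0$, and because $3$ is not a square $\Z[T]\cong\Z[\sqrt3]=\sO_{12}$. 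Computing the matrix of $T^\dagger=\phi_*\circ\psi^*$ the same way and checking that it equals $M$ shows $T$ is self-adjoint for the Rosati involution, which together with $T\notin\Z$ certifies that $T$ generates real multiplication by $\sO_{12}$.

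For the $W_{12}$ statement, the point is that $\sO_{12}=\Z[\sqrt3]=\Z[T]$, so the $\sO_{12}$-eigenforms on $C$ are precisely the eigenvectors of $M$; as $M$ has distinct eigenvalues $\pm\sqrt3$ there are exactly two eigenform lines $\C\cdot\omega_\pm$ with $\omega_\pm=(a_\pm+b_\pm t)\,dt/u$ and $(a_\pm:b_\pm)\in\Proj^1(\Q(\sqrt3))$. On the degree-five hyperelliptic model $u^2=h(t)$, a non-zero form $\omega=(a+bt)\,dt/u$ has divisor of zeros of degree $2$, and a local computation shows $\omega$ has a \emph{double} zero precisely when $b=0$ (a double zero at the Weierstrass point at infinity) or $b\neq 0$ and $h(-a/b)=0$ (a double zero at the finite Weierstrass point over $t=-a/b$); otherwise its two zeros, over $t=-a/b$, are distinct. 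One therefore evaluates $h$ at $-a_\pm/b_\pm\in\Q(\sqrt3)$ and confirms that one of the two eigenforms vanishes at a Weierstrass point, hence has a double zero; this exhibits $(C,\omega)$ as a point of $W_{12}$ and places the moduli point of $C$ on the image of $W_{12}$ in $Y(\sO_{12})$.

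None of these steps is a serious obstacle: every verification above is routine exact arithmetic over $K$ and $\Q(\sqrt3)$, and the only part requiring floating-point work --- the discovery of the equations for $Z$ --- is carried out once, as in Section~\ref{sec:discovery}, and is not part of the certification. The one place that repays care is the last paragraph: one must fix consistently the convention relating the columns of $M$ to the forms $T_\Omega(\omega_j)$ so that eigenvectors of $M$ are genuinely eigenform coordinate vectors, and must check that the algebraic number $-a/b\in\Q(\sqrt3)$ is an \emph{exact} root of $h$ over that field rather than a floating-point near-root.
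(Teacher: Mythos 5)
Your proposal is correct and follows essentially the same route as the paper: the certification is the one-form/trace computation of Sections~\ref{sec:discovery}--\ref{sec:oneforms} (which the paper delegates to the auxiliary computer code, recording the explicit $y$ in the remark after the theorem), and the $W_{12}$ claim is settled by checking that $dt/u$ is an eigenvector of $T_\Omega$, which on the quintic model has a double zero at the Weierstrass point at infinity --- precisely the $b=0$ case of your double-zero criterion. No gaps.
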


\begin{remark}
The curve $C$ in Theorem \ref{thm3} was shown in \cite{EK} to have a
Jacobian which admits real multiplication by $\sO_{12}$.  In fact, it
corresponds to the point $(e,f) = \left( -\tfrac{3}{8},
-\tfrac{1}{2}\right)$ on $Y(\sO_{12})$ in the coordinates of
\cite{EK}. In \cite{KM}, we showed using the Eigenform Location
Algorithm that $dt/u$ is an eigenform for $\sO_{12}$ to conclude that
this point lies on $W_{12}$.
The equations for $Z$ and the maps $\phi, \psi$ yield an alternate
proof of both of these facts. For completeness, we note the expression
for the rational function $y$ on $Z$ needed to define $\psi: (t,u,x)
\mapsto (x,y)$ is
\[
y =  - \frac{11t^4-24t^3+12t^2-112t-132}{64(t-2)^2(t+1)^3} xu \sqrt{3}  - \frac{15t^5-28t^4-36t^3+288t^2-52t-144}{64(t-2)^2(t+1)^3} u  \sqrt{3}.
\]
The rest of the verification is carried out in the computer code.
\end{remark}
Our next example involves a genus two curve without a rational
Weierstrass point.  The resulting correspondence is more complicated,
but still well within the reach of our method.
\begin{theorem} \label{thm4}
Let $C$ be the curve $u^2 = t^6+t^5+7t^2-5t+4$,
and let $\phi : Z \to C$ be the degree two branched cover defined by
\begin{align} \label{eqn:sextic}
& (3t^3-t^2+t+1)(368t^4-597t^3-233t^2+233t+41) x^2  \nonumber \\
&+ x \big( 4(199t^4-31t^3-185t^2-33t+6)  u  \nonumber \\
&  \qquad +2(430t^7-1601t^6+876t^5-623t^4-338t^3+257t^2-168t-65) \big) \\
& + 4(138t^5-153t^4-21t^3+55t^2+3t-18)  u \nonumber \\
& +  552t^8-1616t^7-1435t^6+4654t^5-3949t^4+900t^3+1035t^2-690t+21 = 0. \nonumber
\end{align}
The curve $Z$ is of genus $11$ and admits a map $\psi$ to $C$ of degree
$4$. The induced endomorphism $T = \psi_* \circ \phi^*$ of $\Jac(C)$ satisfies $T^2-2=0$ and generates real multiplication by
$\sO_8$.
\end{theorem}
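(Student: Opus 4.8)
The plan is to follow exactly the template established in Sections \ref{sec:discovery} and \ref{sec:oneforms} for the discriminant $5$ example, now applied to the sextic curve $C : u^2 = t^6+t^5+7t^2-5t+4$ and the degree two cover $\phi : Z \to C$ defined by Equation \ref{eqn:sextic}. First I would record the genus of $Z$: since $\phi$ has degree two, the Riemann--Hurwitz formula gives $2g_Z - 2 = 2(2 \cdot 2 - 2) + (\text{number of branch points})$, and a direct computation of the discriminant of the quadratic in $x$ over $K(C)$ (which can be checked in {\sf Magma}) confirms $g_Z = 11$. Next I would exhibit the second map $\psi : Z \to C$: as in the previous examples, $\psi(t,u,x) = (x,y)$ where $y \in K(Z)$ is a $K(t)$-linear combination of $u$ and $ux$, discovered numerically and then verified symbolically to satisfy $y^2 = h(x)$ using the relation $h(x) \equiv 0$ imposed by Equation \ref{eqn:sextic}; the degree of $\psi$ is $4$ because fixing $x$ leaves a quartic condition on $t$ (the quadratic in $x$ has coefficients of degree up to $8$ in $t$, but after clearing the dependence one checks the fiber has size $4$), with $u$ then determined by the formula for $y$.

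Having set up $\phi$ and $\psi$, the certification that $T = \psi_* \circ \phi^*$ satisfies $T^2 - 2 = 0$ proceeds by computing the matrix $T_\Omega$ of the action on $\Omega(C)$, using Equation \ref{eqn:Tomega}: $T_\Omega(\omega) = \tr_{K(Z)/K(C)}\bigl(\psi^*(\omega)/\phi^*(\omega)\bigr)\,\omega$. Concretely, one differentiates Equation \ref{eqn:sextic} implicitly to express $dx$ in terms of $dt$ on $Z$, forms the ratios $\psi^*(\omega_i)/\phi^*(\omega_i)$ for the standard basis $\omega_1 = dt/u$, $\omega_2 = t\,dt/u$ (for the sextic model one takes the holomorphic differentials $dt/u$ and $t\,dt/u$), and takes the trace down to $K(C)$ — a purely algebraic operation using the minimal polynomial of $x$ over $K(t,u)$ (equivalently over $K(t)$, accounting for $u$). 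This yields a $2\times 2$ matrix $M$ with entries in $K$, and one checks that its minimal polynomial is $X^2 - 2$; since the representation of $\End(\Jac(C))$ on $\Omega(C)$ is faithful, this is also the minimal polynomial of $T$, so $\Z[T] \cong \Z[\sqrt 2] = \sO_8$. All of this is exact arithmetic in $K = \Q$ (or the small extension where $y$ is defined), so it constitutes a rigorous proof.

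It remains to confirm that $T$ generates real multiplication, i.e. that it is self-adjoint for the Rosati involution (the condition $T^2 - 2 = 0$ guarantees $\Z[T] \cong \sO_8$ is a real quadratic order, and one needs that this order acts through Rosati-symmetric endomorphisms preserving the polarization). As in the discriminant $5$ case, the adjoint is $T^\dagger = \phi_* \circ \psi^*$; I would compute $T^\dagger_\Omega$ by the same procedure with the roles of $\phi$ and $\psi$ interchanged (now the trace is over $K(Z)/\psi^* K(C)$, a degree four extension), and verify $T^\dagger_\Omega = T_\Omega$, whence $T = T^\dagger$. Since $\Jac(C)$ is a principally polarized abelian surface, a self-adjoint endomorphism whose minimal polynomial is $X^2 - 2$ generates real multiplication by $\sO_8$, completing the proof.

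The main obstacle I anticipate is bookkeeping in step of computing the trace: because $C$ is given by a sextic (no rational Weierstrass point), the function field $K(Z)$ sits over $K(C) = K(t,u)$ with $u^2 = h(t)$ of degree six, and the coefficients of Equation \ref{eqn:sextic} are of large degree in $t$ and involve $u$ linearly, so simplifying $\psi^*(\omega)/\phi^*(\omega)$ and computing its trace requires care to stay within the correct subfield and to clear denominators correctly. This is exactly the sextic case alluded to in Remark \ref{rmk:sextic}, and while conceptually no harder than the quintic examples, it is the step most prone to computational error — hence the verification is carried out in the accompanying computer code rather than by hand.
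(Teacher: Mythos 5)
Your proposal follows essentially the same route as the paper: the paper proves Theorem \ref{thm4} by repeating the discovery-and-certification template of Sections \ref{sec:discovery} and \ref{sec:oneforms} (trace formula for $T_\Omega$, minimal polynomial check, Rosati self-adjointness via $T^\dagger = \phi_* \circ \psi^*$), with the sextic-specific modifications of Remark \ref{rmk:sextic} and the exact verification delegated to the auxiliary computer code. Your account of the degree-$4$ fiber count and the genus computation is slightly informal, but these are precisely the steps the paper also leaves to {\sf Magma}, so the approaches coincide.
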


\begin{remark}
\label{rmk:sextic}
The curve $C$ in Theorem \ref{thm4} corresponds to the point $(r,s) =
\left( \tfrac{1}{8},\tfrac{59}{32} \right)$ on $Y(\sO_8)$ in the
coordinates of \cite{EK}.  Note that the coefficients of the
polynomial $F$ defining $Z$ are not in $K(t)$, in contrast to the case
where $C$ is a quintic hyperelliptic curve.  This is because the
hyperelliptic involution does not preserve the chosen point at infinity
$P_0$, and therefore does not commute with the deck transformation of
$\phi: Z \to C$.  Therefore, the discovery part of our algorithm in
which we compute equations for $Z$ has to be modified slightly.  The
coefficients of $F$ can be computed by determining a $K$-linear
relation between $1$, $t(Q_i)$, $t(Q_i)^2$, $u_i=u(P_i)$ and $u_i
t(Q_i)$ by LLL for each $i$ (rather than between the first three
quantities as in the quintic case).  The coefficients in these
relations are values of rational functions specialized at $t_i =
t(P_i)$, and we can interpolate to determine these rational functions
exactly.  A similar modification must be made to solve for $y \in
K(Z)$. For brevity, we have omitted the expression for $y$ here,
although it is available in the computer files.
\end{remark}

\section{Correspondences in families}
\label{sec:disc5family}
In this section we describe a correspondence on a universal family of
genus two curves over the entire Hilbert modular surface
$Y(\sO_5)$. There is one significant obstacle to implementing the
method described in Section \ref{sec:discovery} in families.  Suppose
$\left\{ C_\mu : \mu \in U \right\}$ is a family of curves
parametrized by the base $U$ each of which admits real multiplication
by $\sO$.  The method described in Section \ref{sec:discovery} allows
us to compute a correspondence $Z_\mu$ over $C_\mu$ for any particular
$\mu \in U$.  However, the first step in computing $Z_\mu$ involves a
choice of analytic Jacobian endomorphism $T^{an}_\mu$ generating
$\sO$.  There are typically two choices for $T^{an}_\mu$ with a given
minimal polynomial, and it is important to make these choices so that
the matrices $T^{an}_{\mu,\Omega}$ vary continuously in $\mu$ and the
$Z_\mu$'s are fibers of a single family.

To overcome this obstacle we first normalize the entire
family so that $dt/u$ and $t\,dt/u$ are eigenforms, using the
Eigenform Location Algorithm in \cite{KM}.  Then we simply choose
$T^{an}_\mu$ to have $T^{an}_{\mu,\Omega}$ equal to a constant
diagonal matrix.  Having consistently chosen $T^{an}_\mu$ in this way,
we compute $Z_\mu$ for various values of $\mu$ and interpolate to
determine a correspondence over the entire family.  The result is the
following theorem.
\begin{theorem} \label{thm5}
For $(p,q) \in \C^2$, let $C(p,q)$ be the curve defined by the
equation
\begin{equation}
\label{eqn:Cpq}
 \qquad u^2 = t^6 + 2p t^5 + 10q t^3 + 10q^2 t - 5(p-1)q^2
\end{equation} 
and let $\phi: Z(p,q) \to C(p,q)$ be degree two branched cover defined by
\begin{align} \label{eqn:sexticfamily}
& (2t-p)\big(4t +(3+\alpha)p\big)  x^2 +\big( 2(-\alpha-1)u \nonumber \\
& +\alpha(2t^3-2pt^2+p^2t+2q)  -(6t^3-6pt^2-p^2t-10q) \big)x \\
& - 2\big((1-\alpha)t -p\big)u + \alpha(2t^4-p^2t^2+6qt-4pq)  -( 2t^4-2pt^3+3p^2t^2-10qt+10pq ) = 0 \nonumber
\end{align}
where $\alpha = \sqrt{5}$. For generic $(p,q) \in \C^2$, the curve
$Z(p,q)$ is of genus $8$ and admits a holomorphic map $\psi$ to
$C(p,q)$ of degree $3$. The endomorphism $T = \psi_* \circ \phi^*$ of
$\Jac(C(p,q))$ is self-adjoint with respect to the Rosati involution
and generates real multiplication by $\sO_5$.
\end{theorem}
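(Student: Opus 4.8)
The plan is to prove Theorem~\ref{thm5} by the same two-step strategy used for Theorem~\ref{thm:ex1}, but with every computation carried out symbolically over the function field $K=\Q(p,q,\sqrt5)$ in place of $\Q$. Since the equations for $C(p,q)$ and $Z(p,q)$ (Equations~\ref{eqn:Cpq} and~\ref{eqn:sexticfamily}) and the rational function $y$ defining $\psi$ are already written down in the statement, the proof itself involves no floating-point computation; the \emph{discovery} of these equations is a separate matter, discussed at the end. What remains to check, for $(p,q)$ outside a proper closed subset of $\C^2$, is: (i) that $\phi\colon Z(p,q)\to C(p,q)$ and $\psi\colon Z(p,q)\to C(p,q)$ are covers of a smooth genus two curve by a smooth genus eight curve, of degrees two and three respectively; (ii) that the matrix $M(p,q)\in M_2(K)$ of $T_\Omega=\phi_*\circ\psi^*$ acting on $\Omega\big(C(p,q)\big)$ satisfies the minimal polynomial of a generator of $\sO_5$, so that $\Z[T]\cong\sO_5$; and (iii) that $T$ is self-adjoint for the Rosati involution, i.e.\ the matrix of $T^\dagger$ computed by the same recipe equals $M(p,q)$.

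For (ii) and (iii) I would follow Section~\ref{sec:oneforms} essentially verbatim. Write $\omega_1=dt/u$, $\omega_2=t\,dt/u$; since $\deg h=6$ these still form a basis of $\Omega\big(C(p,q)\big)$, with $\omega_2/\omega_1=t$. As in Remark~\ref{rmk:sextic}, the polynomial $F$ defining $Z(p,q)$ is linear in $u$, so over the function field $K(Z)$ the relation $F=0$ expresses $u$ as a rational function of $t$ and $x$; implicitly differentiating $F$ together with $u^2=h(t)$ yields $dx/dt\in K(Z)$, and hence explicit rational expressions for $\psi^*(\omega_i)/\phi^*(\omega_1)$. Since $\phi$ has degree two, the trace $\tr_{K(Z)/\phi^*K(C)}$ is simply the sum over the two roots $x,x'$ of $F$ and is read off from the symmetric functions $x+x'$ and $xx'$ given by the coefficients of $F$; holomorphicity forces each trace to be a polynomial of degree at most one in $t$, and its two coefficients form a column of $M(p,q)$. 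One then verifies that $M(p,q)$ satisfies the minimal polynomial of a generator of $\sO_5$ (e.g.\ $M^2-M-I=0$) as an identity of rational functions in $p,q$; this proves $\Z[T]\cong\sO_5$ because the action of $\End\big(\Jac C(p,q)\big)$ on $\Omega\big(C(p,q)\big)$ is faithful. For (iii) one runs the analogous computation for $T^\dagger=\phi_*\circ\psi^*$ on $\Jac$, where the pushforward is now along the degree three map $\psi$, and checks that the resulting matrix again equals $M(p,q)$. Neither (ii) nor (iii) needs genericity: both are valid at every $(p,q)$ for which $C(p,q)$ and $Z(p,q)$ are smooth.

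For (i), the genus of $Z(p,q)$ can be obtained directly by computing the function field $K(Z)$ in {\sf Magma}, or via Riemann--Hurwitz for the double cover $\phi$: as a double cover of a genus two curve, $Z(p,q)$ has genus $8$ as soon as one checks, for generic $(p,q)$, that $\phi$ has exactly $10$ branch points, and this branch locus is cut out on $C(p,q)$ by the $x$-discriminant of $F$. The degree of $\psi$ is $[\,K(Z):\psi^*K(C)\,]$; since $u$ is already a rational function of $(t,x)$ via $F=0$, the field $K(Z)$ is generated over $K(C)\cong K(x,y)$ by $t$ alone, subject to $u(t,x)^2=h(t)$ and $y=y(t,u(t,x),x)$, and a resultant computation shows this degree equals $3$ for generic $(p,q)$. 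The set where any of this fails is the zero locus of a finite list of polynomials in $p,q$ (leading coefficients and discriminants), hence proper; that it is proper --- so that the good locus is nonempty --- can be witnessed at a single explicit point, for instance one coming from the model of $Y(\sO_5)$ in \cite{EK}.

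The real obstacle, as flagged at the start of Section~\ref{sec:disc5family}, is not this (lengthy but mechanical) verification but \emph{producing} equations $F$, $y$ that are algebraic in $(p,q)$ at all. The fiberwise method of Section~\ref{sec:discovery} needs, for each $\mu=(p,q)$, a choice of analytic endomorphism $T^{an}_\mu$ generating $\sO_5$, and of the two such endomorphisms with the correct minimal polynomial one must select the branch for which $T^{an}_{\mu,\Omega}$ varies continuously in $\mu$; otherwise the fiberwise curves $Z_\mu$ do not assemble into a single family and interpolation fails. I would resolve this exactly as in Section~\ref{sec:disc5family}: first normalize the whole family, using the Eigenform Location Algorithm of \cite{KM}, so that $dt/u$ and $t\,dt/u$ are genuine eigenforms, then take $T^{an}_{\mu,\Omega}$ to be a fixed diagonal matrix, and finally sample $Z_\mu$ at enough values of $\mu$ and interpolate the coefficients. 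A secondary, purely bookkeeping, difficulty is that over $\Q(p,q)$ the two eigenforms are Galois-conjugate, so $Z(p,q)$ is defined only over the quadratic extension $\Q(p,q,\sqrt5)$ --- its conjugate being the correspondence attached to the conjugate generator of $\sO_5$ --- and one must carry $\alpha=\sqrt5$, together with the sextic-model modifications of Remark~\ref{rmk:sextic}, through the interpolation. Once the equations are in hand, the proof reduces to the finite symbolic verification (i)--(iii) above.
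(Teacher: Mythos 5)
Your proposal is correct and follows essentially the same route as the paper: the paper's proof of Theorem~\ref{thm5} is exactly the certification of Sections~\ref{sec:discovery} and~\ref{sec:oneforms} (trace computation for $T_\Omega$ along the degree-two map $\phi$, verification of $T^2-T-1=0$ and of Rosati self-adjointness via $T^\dagger=\phi_*\circ\psi^*$) carried out symbolically over $\Q(p,q)$, together with the genus and degree checks delegated to the auxiliary computer files, and the discovery step handled by normalizing the family so that $dt/u$ and $t\,dt/u$ are eigenforms before interpolating. Your separation of the genericity hypothesis (needed only for smoothness, genus, and the degree of $\psi$, not for the identities on one-forms) is a correct refinement of what the paper leaves implicit.
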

\noindent The only complication in the proof of Theorem \ref{thm5} is
that we need to work in the function field over the base field
$\Q(p,q)$ rather than $\Q$.  We provide computer code in the auxiliary
files to carry out the certification as in our previous examples. For
brevity, we have omitted the lengthy expression for $y$ in the map
$\psi: (t,u,x) \mapsto (x,y)$; it is available from the computer
files. For that choice of $y$, the endomorphism $T$ has minimal
polynomial $T^2-T-1$. (Replacing $y$ with $-y$ gives rise to an
endomorphism with minimal polynomial $T^2+T-1$.)

\begin{remark}
The coordinates for $Y(\sO_5)$ in Theorem \ref{thm5} are related to
the coordinates $(m,n)$ appearing in \cite{EK} by
\begin{equation}
(p,q) = \big(m^2/5 - n^2,(m-\alpha n)(5n^2-m^2)(5n^2-m^2+5)/125\big).
\end{equation}
In particular, our coordinates are quadratic twists of those appearing
in \cite{EK}.  This is because they are adapted to the eigenform
moduli problem, not the real multiplication moduli problem.  The field
of definition of a point $(p,q)$ is the field of definition of the
eigenforms $dt/u$ and $t\,dt/u$, which need not agree with the field
of definition of real multiplication.  In fact, these moduli problems
are isomorphic over $\Q(\sqrt{5})$, but not over $\Q$.  This also
explains the appearance of $\alpha = \sqrt{5}$ in the equation
defining $Z(p,q)$.
\end{remark}

\section{Divisor classes supported at eigenform zeros} \label{sec:eigenformzeros}
We now turn to the applications in dynamics for our equations for real
multiplication stated in the introduction.  Recall that $L$ is the
multisection in the universal Jacobian over $\M_{2,1}(\sO_5)$ whose
values at the pointed curve $(C,P)$ are divisors of the form in
Equation \ref{eqn:multisection}.  Our goal is to prove that the locus
$\M_{2,1}(\sO_5;L)$ defined by the vanishing of $L$ is an irreducible
surface in $\M_{2,1}$ and that $\M_{2,1}(\sO_5;L)$ is the closure of a
complex geodesic for the Teichm\"uller metric.

\paragraph{Marking eigenform zeros.}
We start by passing to a cover of $\M_{2,1}(\sO_5)$ on which we can
describe the multisection $L$ in terms of sections.  To that end we
define $\M_2^{\mathrm{ze}}(\sO_5)$ to be the space of pairs $(C,Z)$
where $C \in \M_2(\sO_5)$ and $Z \in C$ is a zero of an eigenform for
real multiplication by $\sO_5$.  Similarly we define
$\M_{2,1}^{\mathrm{ze}}(\sO_5)$ to be the pointed version consisting
of triples $(C,P,Z)$ with $(C,P) \in \M_{2,1}(\sO_5)$ and $(C,Z) \in
\M_2^{\mathrm{ze}}(\sO_5)$.  Here we are allowing $Z = P$.

The space $\M_{2}^{\mathrm{ze}}(\sO_5)$ is birational to the Hilbert
modular surface $Y(\sO_5)$.  To see this, fix $\gamma \in \sO_5$
satisfying $\gamma^2-\gamma-1 = 0$.  A point $(C,Z) \in
\M_2^{\mathrm{ze}}(\sO_5)$ determines a Rosati invariant 
endomorphism $T_\gamma(C,Z)$ of $\Jac(C)$ by
the requirement that the line of one-forms on $C$ vanishing at $Z$ are
$\gamma$-eigenforms for $T_\gamma(C,Z)$.  The map $(C,Z) \mapsto
(\Jac(C),T_\gamma(C,Z))$ is birational.  In particular,
$\M_2^{\mathrm{ze}}(\sO_5)$ is an irreducible surface.

\paragraph{Sections.}  Let $\eta$ be the hyperelliptic involution on $C$.  We can now define a section $L_\gamma$ of the universal Jacobian over $\M_{2,1}^{\mathrm{ze}}(\sO_5)$ by the formula
\begin{equation}
\label{eqn:section}
L_\gamma(C,P,Z) = (P-Z) - T_\gamma(C,Z) \cdot (\eta(Z) - Z) \in \Jac(C).
\end{equation}
Let $\M_{2,1}^{\mathrm{ze}}(\sO_5;L_\gamma)$ denote the locus in
$\M_{2,1}^{\mathrm{ze}}(\sO_5)$ where $L_\gamma$ vanishes.  Similarly,
we define $T_{1-\gamma}(C,Z)$, $L_{1-\gamma}$ and
$\M_{2,1}^{\mathrm{ze}}(\sO_5;L_{1-\gamma})$ by replacing $\gamma$
with its Galois conjugate $1-\gamma$.  From the definition of the
multisection $L$, it is clear the map forgetting $Z$ sends the union
of $\M_{2,1}^{\mathrm{ze}}(\sO_5;L_\gamma)$ and
$\M_{2,1}^{\mathrm{ze}}(\sO_5;L_{1-\gamma})$ onto $\M_{2,1}(\sO_5;L)$.
In fact, each of these spaces individually maps onto
$\M_{2,1}(\sO_5;L)$ since the sections $L_\gamma$ and $L_{1-\gamma}$
are related by $L_\gamma(C,P,Z) = L_{1-\gamma}(C,P,\eta(Z))$.  We
record this fact in the following proposition.
\begin{proposition}
The space $\M_{2,1}^{\mathrm{ze}}(\sO_5;L_\gamma)$ maps onto $\M_{2,1}(\sO_5;L)$.
\end{proposition}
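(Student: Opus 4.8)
The plan is to exploit the two displayed identities that the paper has just recorded and reduce the surjectivity statement to the already-established fact that the forgetful map from the union of $\M_{2,1}^{\mathrm{ze}}(\sO_5;L_\gamma)$ and $\M_{2,1}^{\mathrm{ze}}(\sO_5;L_{1-\gamma})$ lands onto $\M_{2,1}(\sO_5;L)$. First I would fix a point $(C,P) \in \M_{2,1}(\sO_5;L)$; by definition some branch of the multisection $L$ vanishes at $(C,P)$, and unwinding Equation \ref{eqn:multisection} this means there is a Rosati-invariant $T$ with $T^2-T-1=0$ and an ordered pair $(Z_1,Z_2)$ of zeros of a $T$-eigenform $\omega$ on $C$ with $(P-Z_1) - T\cdot(Z_2-Z_1) = 0$ in $\Jac(C)$. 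Since an eigenform on a genus two curve has a divisor of degree two, and $Z_1 + Z_2$ is the divisor of $\omega$, the two zeros are interchanged by the hyperelliptic involution $\eta$, so $Z_2 = \eta(Z_1)$. Setting $Z = Z_1$, either $T = T_\gamma(C,Z)$ or $T = T_{1-\gamma}(C,Z)$ according to whether $\omega$ is a $\gamma$- or a $(1-\gamma)$-eigenform for $T$; in the first case the relation reads exactly $L_\gamma(C,P,Z) = 0$, so $(C,P,Z) \in \M_{2,1}^{\mathrm{ze}}(\sO_5;L_\gamma)$ maps to $(C,P)$, and we are done.

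The only thing left is the case where the vanishing branch of $L$ came from the \emph{other} endomorphism, i.e.\ $L_{1-\gamma}(C,P,Z) = 0$ for some $Z$, and we must nonetheless produce a point of $\M_{2,1}^{\mathrm{ze}}(\sO_5;L_\gamma)$ over $(C,P)$. Here I would invoke the identity $L_\gamma(C,P,Z') = L_{1-\gamma}(C,P,\eta(Z'))$ recorded just before the proposition: applying it with $Z' = \eta(Z)$ gives $L_\gamma(C,P,\eta(Z)) = L_{1-\gamma}(C,P,Z) = 0$, so $(C,P,\eta(Z))$ lies in $\M_{2,1}^{\mathrm{ze}}(\sO_5;L_\gamma)$ and maps to $(C,P)$ under the map forgetting the eigenform zero. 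Combining the two cases, every point of $\M_{2,1}(\sO_5;L)$ has a preimage in $\M_{2,1}^{\mathrm{ze}}(\sO_5;L_\gamma)$, which is the assertion. (That the image is contained in $\M_{2,1}(\sO_5;L)$ is immediate from the definitions: forgetting $Z$ from a zero of $L_\gamma$ produces a zero of the multisection $L$.)

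The main obstacle — really the only subtlety — is the bookkeeping of the four branches of $L$ at a generic point, i.e.\ making sure that \emph{every} branch, and not just the ones literally of the form $L_\gamma$, is accounted for. The four branches correspond to the two Galois-conjugate endomorphisms $T$ and $1-T$ crossed with the two orderings of the pair of eigenform zeros; the two identities $Z_2 = \eta(Z_1)$ and $L_\gamma(C,P,Z) = L_{1-\gamma}(C,P,\eta(Z))$ together show that all four are absorbed by $L_\gamma$ after possibly replacing $Z$ by $\eta(Z)$. One should also note the degenerate case $Z = P$, which is explicitly allowed in the definition of $\M_{2,1}^{\mathrm{ze}}(\sO_5)$ and causes no difficulty. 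No dimension count or irreducibility input is needed for this statement — it is purely a set-theoretic surjectivity claim — so the argument is short; the content is entirely in having set up the spaces and the two identities correctly, which the paper has already done.
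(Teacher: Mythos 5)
Your argument is correct and is essentially the paper's: the authors note that the union of the $L_\gamma$- and $L_{1-\gamma}$-loci surjects onto $\M_{2,1}(\sO_5;L)$ by definition of the multisection, and then use the identity $L_\gamma(C,P,Z)=L_{1-\gamma}(C,P,\eta(Z))$ to absorb the $L_{1-\gamma}$-locus into the $L_\gamma$-locus, exactly as you do. (A minor quibble: your parenthetical description of the four branches as ``endomorphism crossed with ordering'' double-counts, since swapping the ordering of the zeros is equivalent to replacing $T$ by $1-T$; the correct parametrization is endomorphism crossed with choice of eigenform, but this does not affect your proof.)
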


We will now use our equations for real multiplication to show that
$\M_{2,1}^{\mathrm{ze}}(\sO_5;L_\gamma)$ is a section of
$\M_{2,1}^{\mathrm{ze}}(\sO_5) \to \M_2^{\mathrm{ze}}(\sO_5)$.
\begin{proposition}
\label{prop:section}
For each $(C,Z) \in \M_2^{\mathrm{ze}}(\sO_5)$, there is a unique
solution $P \in C$ to the equation $L_\gamma(C,P,Z) = 0$.
\end{proposition}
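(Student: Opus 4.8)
The plan is to reduce the statement to a computation in the Jacobian of a fixed curve, using the fact that $L_\gamma(C,P,Z)=0$ is equivalent to a linear equivalence of divisors on $C$. Fixing $(C,Z)\in\M_2^{\mathrm{ze}}(\sO_5)$, the equation $L_\gamma(C,P,Z)=0$ reads $(P-Z) = T_\gamma(C,Z)\cdot(\eta(Z)-Z)$ in $\Jac(C)$, so the right-hand side is a fixed point $D_0\in\Jac(C)\cong\Pic^0(C)$ independent of $P$, and we must show there is exactly one $P\in C$ with $AJ(P-Z)=D_0$, i.e. exactly one effective divisor of degree one on $C$ in the linear system of $D_0 + Z$. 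First I would invoke Riemann--Roch: for a genus two curve, a degree one divisor $D$ has $\ell(D)=1$ unless $D$ is itself linearly equivalent to an effective divisor of degree one, in which case it is effective and unique (two distinct points $P,P'$ with $P\sim P'$ would force a degree one map $C\to\Proj^1$, impossible for $g=2$). So existence and uniqueness of $P$ amount precisely to showing that $D_0 + Z$ is linearly equivalent to an effective divisor, equivalently that $D_0$ lies on the image of the Abel--Jacobi embedding $C\hookrightarrow\Jac(C)$, $Q\mapsto Q-Z$ — which for genus two is exactly the translate of the theta divisor $\Theta$ through the origin.

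The heart of the argument is therefore to prove that the point $D_0 = T_\gamma(C,Z)\cdot(\eta(Z)-Z)$ lies on the theta divisor (more precisely, on the appropriate translate), and this is where our explicit equations for real multiplication enter. Using the correspondence $Z(p,q)\to C(p,q)$ from Theorem~\ref{thm5} — in the normalization where $dt/u$ and $t\,dt/u$ are eigenforms, so that $T_\gamma$ acts diagonally on $\Omega(C)$ and one of the coordinate one-forms vanishes at $Z$ — I would compute $T_\gamma\cdot(\eta(Z)-Z)$ explicitly as a divisor class via Equation~\ref{eqn:Tondiv}, namely $\psi(\phi^{-1}(\eta(Z))) - \psi(\phi^{-1}(Z))$, and then reduce this degree zero divisor using the standard group law on $\Jac(C)$ \cite{CF}. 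One checks that after reduction it has the form $P - Z$ for a single point $P$ expressed as an explicit algebraic function of $(p,q)$; producing this $P$ simultaneously establishes existence and, combined with the Riemann--Roch uniqueness remark above, establishes uniqueness. The computation is algebraic over $\Q(p,q)$ (or a small extension), so it certifies the statement for generic $(C,Z)$, and then the conclusion propagates to all $(C,Z)\in\M_2^{\mathrm{ze}}(\sO_5)$ by specialization, excluding only the locus where the reduced divisor degenerates.

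The main obstacle I anticipate is the interplay between the choice of eigenform $\omega$ vanishing at $Z$ and the labelling of zeros and endomorphisms: the multisection $L$ has four branches, and I must make sure that the section $L_\gamma$ picks out a single branch consistently as $(C,Z)$ varies, so that the resulting $P$ is well defined. Concretely this means keeping track of which of $T_\gamma$, $T_{1-\gamma}$ acts with $Z$ as a zero of the $\gamma$-eigenform (rather than the $(1-\gamma)$-eigenform), and verifying that the reduction of $T_\gamma\cdot(\eta(Z)-Z)$ never lands at the origin of $\Jac(C)$ — which would correspond to $P=Z$ being forced or to a jump in $\ell$ — on a dense open set. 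I expect these to be bookkeeping issues rather than genuine difficulties, handled by the explicit formulas and by the observation (used already in the proof of Theorem~\ref{thm5}) that the relevant quantities vary continuously with the eigenform normalization; the actual verification will be relegated to the auxiliary computer files, exactly as for the other theorems in this paper.
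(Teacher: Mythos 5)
Your proposal follows essentially the same route as the paper: uniqueness from the fact that a smooth genus two curve admits no degree one map to $\Proj^1$ (your Riemann--Roch phrasing is equivalent), and existence by an explicit certificate on the normalized family $C(p,q)$ of Theorem~\ref{thm5}, computing $T_\gamma\cdot(\eta(Z)-Z)$ via Equation~\ref{eqn:Tondiv} and the group law and exhibiting the point $P$ as an algebraic function of the parameters. The paper does exactly this, writing down $P(z,q)$ in closed form and verifying $L_\gamma=0$ in the auxiliary files.

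There is, however, one genuine gap in your final step. The proposition asserts existence for \emph{every} $(C,Z)\in\M_2^{\mathrm{ze}}(\sO_5)$, but you conclude only that the generic computation ``propagates to all $(C,Z)$ by specialization, excluding only the locus where the reduced divisor degenerates.'' That exclusion is precisely what you are not allowed to make: a divisor class $D_0\in\Jac(C)$ lying on the theta divisor is a codimension one condition, so a priori the generic verification only proves the statement on a dense open set, and specialization alone does not cross the degenerate locus. The paper closes this by observing that the locus of pairs $(C,Z)$ admitting a solution to $L_\gamma(C,P,Z)=0$ is \emph{closed} in $\M_2^{\mathrm{ze}}(\sO_5)$, because the forgetful map $\M_{2,1}^{\mathrm{ze}}(\sO_5)\to\M_2^{\mathrm{ze}}(\sO_5)$ is projective (the fiber is the proper curve $C$, and the vanishing of the section cuts out a closed set upstairs whose image is therefore closed); combined with irreducibility of $\M_2^{\mathrm{ze}}(\sO_5)$, a dense existence locus must be everything. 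You need this properness argument, or some substitute for it, to get the full statement. A minor further remark: your worry that the reduced divisor might ``land at the origin'' is unnecessary, since the paper explicitly allows $P=Z$, and in that case $P=Z$ is simply the (unique) solution.
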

\begin{proof}
The uniqueness is easy and does not require our equations for real
multiplication.  If $P_1,P_2$ are solutions to $L_\gamma(C,P,Z) = 0$,
then $P_1-P_2$ is a principal divisor.  Since the smooth genus two
curve $C$ admits no degree one rational map, we must have $P_1=P_2$.

The locus in $\M_2^{\mathrm{ze}}(\sO_5)$ consisting of pairs $(C,Z)$
which admit a solution to $L_\gamma(C,P,Z) = 0$ is closed.  This
follows from the fact that $\M_{2,1}^{\mathrm{ze}}(\sO_5) \to
\M_2^{\mathrm{ze}}(\sO_5)$ is a projective map and, since
$\M_2^{\mathrm{ze}}(\sO_5)$ is irreducible, it is enough to check that
the generic pair $(C,Z) \in \M_2^{\mathrm{ze}}(\sO_5)$ admits such a
solution.

Recall the notation of Theorem \ref{thm5} and its proof in the
auxiliary files.  For generic $(p,q) \in \C^2$, we have a genus two
curve $C(p,q)$, a Rosati invariant endomorphism $T(p,q)$ of $\Jac(C(p,q))$ satisfying $T(p,q)^2-T(p,q)-1
= 0$, and a $T(p,q)$-eigenform $\omega(p,q) = t\,dt/u$ with eigenvalue
$\gamma = (1+\alpha)/2$.  To mark a zero of $\omega(p,q)$, we choose
$z$ a square root of $5(1-p)$ and set
\begin{equation}
Z(z,q) = (0,zq) \in C(p,q).
\end{equation}
Counting dimensions, we see that the $(z,q)$-plane parametrizes an
open subset of $\M_2^{\mathrm{ze}}(\sO_5)$ by the formula $(z,q)
\mapsto (C(z,q),Z(z,q))$.  We further define
\begin{equation}
\label{eqn:Pzq}
P(z,q) = \left( 2(1-p),z(8-16p+8p^2+5q)/\alpha \right)  \in C(z,q).
\end{equation}
Using our equations for the correspondence defining $T(z,q)$ and
Equation \ref{eqn:Tondiv}, we compute the divisor $T(z,q) \cdot
(\eta(Z(z,q)) - Z(z,q))$.  Combined with standard formulas for the
group law on $\Jac(C)$ (which have been implemented in {\sf Magma}),
we verify that $L_\gamma(C(z,q),P(z,q),Z(z,q)) = 0$.  We include code
in the auxiliary files to verify this equation.
\end{proof}
We are now ready to prove Theorem \ref{thm:eformzeros}.
\begin{proof}[of Theorem \ref{thm:eformzeros}]
The locus $\M_{2,1}^{\mathrm{ze}}(\sO_5;L_\gamma)$ is biregular to the
irreducible surface $\M_2^{\mathrm{ze}}(\sO_5)$ by Proposition
\ref{prop:section}, and maps onto $\M_{2,1}(\sO_5;L)$ by a map of
finite degree.  Therefore $\M_{2,1}(\sO_5;L)$ is an irreducible
surface in $\M_{2,1}$.
\end{proof}

\paragraph{Complex geodesics in moduli space.}
We now prove Theorem \ref{thm:cxgeod} about geodesics in $\M_{2,1}$
which is a corollary of Theorem \ref{thm:eformzeros}.  We refer the
reader to the survey articles \cite{Wr} and \cite{Z} for background on
geodesics in the moduli space of curves.

\begin{proof}[of Theorem \ref{thm:cxgeod}]
Fix a curve $C \in \M_2(\sO_5)$ and an $\sO_5$-eigenform $\omega$ on
$C$.  The form $\omega$ generates a complex geodesic $f_\omega : \hh
\to \M_2$ with $f_\omega(i) = C$ and $f_\omega'(i)$ tangent to
$\M_2(\sO_5)$.  By \cite{Mc1}, the image of $f_\omega$ is contained in
$\M_2(\sO_5)$.  We choose $C$ and $\omega$ generically so that
$\overline{f_\omega(\hh)} = \M_2(\sO_5)$ (cf. \cite{Mc2}).

The values of $f_\omega$ are related to $C$ by Teichm\"uller mappings.
In particular, there is a distinguished holomorphic one-form
$\omega_\tau$ (up to scale) on $C_\tau = f_\omega(\tau)$ and a
homeomorphism $C \to C_\tau$ which is affine for the singular flat
metrics $|\omega|$ and $|\omega_\tau|$.  The zeros of $\omega$ are in
bijection with those of $\omega_\tau$ via the Teichm\"uller mapping
and, by \cite{Mc1}, $\omega_\tau$ is also an $\sO_5$-eigenform.  We
conclude that there is a holomorphic zero marked lift
\begin{equation}
f_\omega^{\mathrm{ze}} : \hh \to \M_2^{\mathrm{ze}}(\sO_5)
\end{equation}
whose composition with the map forgetting $Z$ equals $f_\omega$.
Composing $f_\omega^{\mathrm{ze}}$ with the section
$\M_2^{\mathrm{ze}}(\sO_5) \to \M_{2,1}^{\mathrm{ze}}(\sO_5;L_\gamma)$
and the map forgetting $Z$, we obtain a map
\[ f_\omega^P : \hh \to \M_{2,1} \]
which is a section of $f_\omega$ over $\M_{2,1} \to \M_2$.

There are several ways to conclude that $f_\omega^P$ is a complex
geodesic.  The map $f_\omega^P$ is a section over the complex geodesic
$f_\omega$, and such sections are complex geodesics by a well-known
argument relying on the equality of the Kobayashi and Teichm\"uller
metrics on $\M_{g,n}$.  Alternatively, for $(C_\tau,P_\tau) =
f_\omega^P(\tau)$ we have an $\sO_5$-eigenform $\omega_\tau$ and a
zero $Z_\tau$ of $\omega_\tau$ satisfying
$L_\gamma(C_\tau,P_\tau,Z_\tau) = 0$.  We conclude that the relative
periods
\[ \int_{Z_\tau}^{P_\tau} \omega_\tau, \mbox{ and } \gamma \int_{Z_\tau}^{\eta(Z_\tau)} \omega_\tau \]
differ by an absolute period of $\omega_\tau$.  Consequently, the
Teichm\"uller mapping from $C \to C_\tau$ sends $P$ to $P_\tau$ and
$f_\omega^P$ is a complex geodesic.

Thus we have a complex geodesic $f_\omega^P$ in $\M_{2,1}$ such that
$\overline{f_\omega^P(\hh)}$ lies in $\M_{2,1}(\sO_5;L)$ and maps onto
$\M_2(\sO_5)$.  Since both $\M_{2,1}(\sO_5;L)$ and $\M_2(\sO_5)$ are
irreducible surfaces, we must have $\overline{f_\omega^P(\hh)} =
\M_{2,1}(\sO_5;L)$.
\end{proof}
\begin{figure}
  \begin{center}
   \includegraphics[scale=0.35]{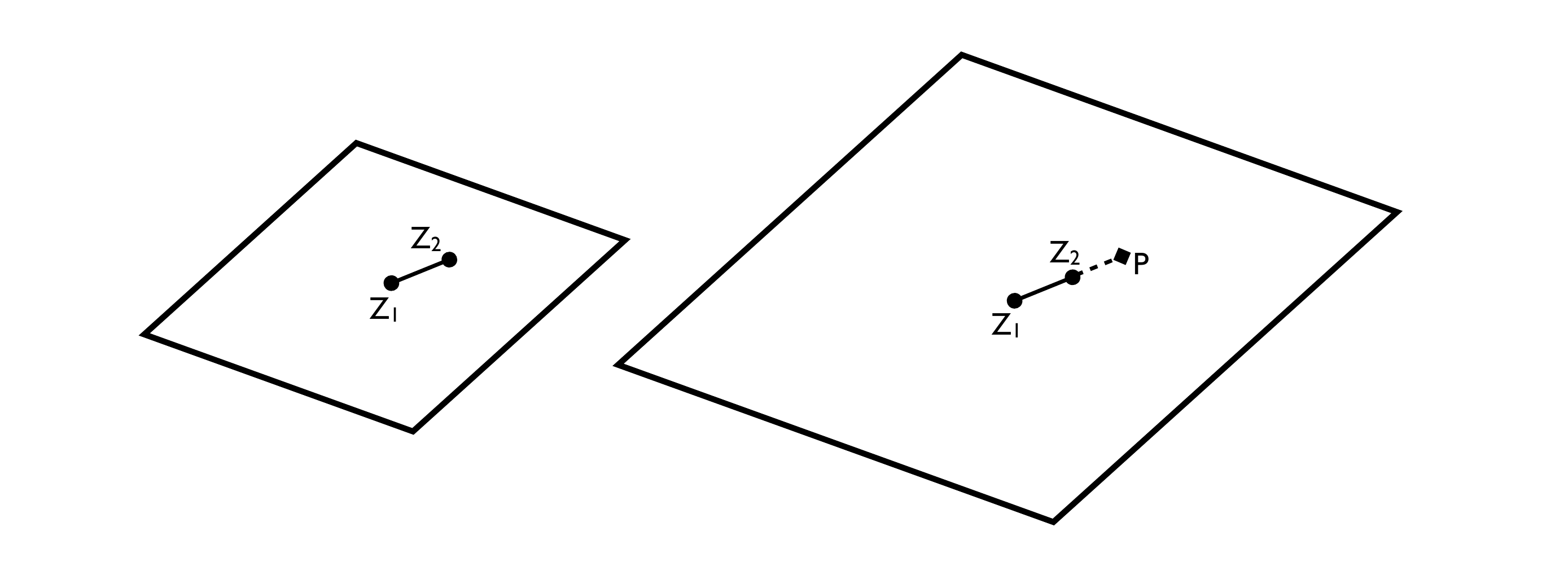}
\end{center}
\sfcaption{\label{fig:polygons} Genus two eigenforms for $\sO_5$ can
  be built out of a parallelogram $U \subset \C$ and the similar
  parallelogram $\gamma U$ by a connected sum.  The resulting form has
  zeros at $Z_1=0$ and $Z_2=t$ and the marked point $P=\gamma t$
  satisfies $L_\gamma(C,P,Z_1) = 0$.}
\end{figure}
\paragraph{Polygons and marked points.}
McMullen described how to polygonally present eigenforms for $\sO$ in
genus two \cite{Mc2}.  Set $\gamma = (1+\sqrt{5})/2$ to be the golden
mean. Eigenforms for discriminant $5$ are obtained from a
parallelogram $U \subset \C$ centered at $0$ and the similar
parallelogram $\gamma U \subset \C$ by gluing opposing sides on each
parallelogram and performing a connected sum along a straight line
interval $I$ connecting $0$ and $t \in U$.  The form $dz$ is invariant
under these gluing maps and the resulting quotient $(C,\omega) = (U
\#_I \gamma U,dz) / \sim$ is an $\sO_5$-eigenform.  Wright's
conjecture of the existence of a dynamically natural way to mark
curves in $\M_2(\sO_5)$ posited in particular that one could mark the
eigenform $(U \#_I \gamma U)/\sim$ at the point $P = \gamma t$ in the
polygon $\gamma U$ (see Figure \ref{fig:polygons}).

One way to see that the algebraically presented locus
$\M_{2,1}(\sO_5;L)$ in Theorem \ref{thm:cxgeod} equals the locus
polygonally presented by Wright is by first checking that they agree
somewhere, e.g. at the regular decagon eigenform which is the limit of
$(C(z,q),P(z,q))$ as $q \to 0$ in our parametrization.  The period
relations imposed by the vanishing of $L$ then imply that the points
marked in $\M_{2,1}(\sO_5;L)$ coincide with Wright's polygonal
description at a nearby generic point.  Therefore, the algebraic and
polygonal descriptions agree along an entire complex geodesic which is
dense in $\M_{2,1}(\sO_5;L)$.

\affiliationone{Abhinav Kumar \\
Department of Mathematics\\
Stony Brook University\\
Stony Brook, NY 11794 \\
USA
\email{thenav@gmail.com}}
\affiliationtwo{Ronen E. Mukamel \\
Department of Mathematics \\
Rice University, MS 136 \\
6100 Main St. \\
Houston, TX 77005 \\
USA
\email{ronen@rice.edu}}

\end{document}